\newtheorem{theorem}{Theorem}[section]
\newtheorem{prop}[theorem]{Proposition}
\newtheorem{lemma}[theorem]{Lemma}
\newtheorem{cor}[theorem]{Corollary}
\newtheorem{defn}[theorem]{Definition}
\newtheorem{claim}[theorem]{Claim}
\theoremstyle{definition}
\newcounter{tenumerate}
\def\P{\mathbb{P}}
\newcommand{\one}{\1}
\newcommand{\deq}{\stackrel{\scriptscriptstyle\triangle}{=}}
\renewcommand{\epsilon}{\varepsilon}
\newcommand{\1}{\mathbf{1}}
\DeclareMathOperator{\var}{Var}
\newcommand{\R}{{\mathbb R}}
\newcommand{\N}{{\mathbb N}}
\newcommand{\E}{{\mathbb E}}
\newcommand{\remove}[1]{}
\renewcommand{\leq}{\leqslant}
\renewcommand{\geq}{\geqslant}
\def\XXint#1#2#3{{\setbox0=\hbox{$#1{#2#3}{\int}$}
\vcenter{\hbox{$#2#3$}}\kern-.5\wd0}}
\begin{document}

\title{{\bf A sharp estimate for cover times on binary trees}}

\author{Jian Ding\thanks{Partially supported by Microsoft Research. Much of the work was carried out during a visit of J.D. to Weizmann institute.} \\
U. C. Berkeley \and Ofer Zeitouni\thanks{Partially supported by NSF
grant DMS-0804133 and a grant from the Israel Science Foundation.}
\\ University of Minnesota\\ \& Weizmann institute}
\date{March 27, 2011}

\maketitle

\begin{abstract}
    We compute the second order correction for the cover time
    of the binary tree of depth $n$ by (continuous-time) random walk,
    and show that with  probability approaching $1$ as $n$ increases,
    $\sqrt{\tau_{\mathrm{cov}}}=\sqrt{|E|}[
\sqrt{2\log 2}\cdot n - {\log n}/{\sqrt{2\log 2}} + O((\log\log
n)^8]$, thus showing that the second order correction differs from
the corresponding one for the maximum of the Gaussian free field on
the tree.
\end{abstract}

\section{Introduction}

The cover time of a random walk on a graph, which is the time it
takes the walk to visit every vertex in the graph, is a basic
parameter and has been researched intensively over the last several
decades (see \cite{AF, LPW09, LP} for  background). One often
studied aspect concerns
precise estimates for cover times on specific graphs including 2D
lattices and regular trees.  For the 2D discrete torus, the
asymptotics of the cover time were established by Dembo, Peres,
Rosen and Zeitouni \cite{DPRZ04}. For regular trees, the asymtotics
of the cover time were evaluated by Aldous \cite{Aldous91} and a
tightness result for the cover time after suitable normalization was
demonstrated by Bramson and Zeitouni \cite{BZ09}. It was conjectured
in \cite{BZ09} that the cover time of
2D discrete tori exhibits a similar tightness behavior.

Meanwhile, the supremum of the Gaussian free field (GFF) was also
heavily studied. For squares in the 2D lattice, the first order
asymptotics were evaluated  by
Bolthausen, Deuschel and Giacomin
\cite{BDG01}. Interestingly, both \cite{BDG01} and \cite{DPRZ04} are
based on the study of similar tree structures for the 2D lattice; in
fact, the square of the GFF has the same first order asymptotics as
the cover time after proper normalization. Recently, Ding, Lee, and
Peres \cite{DLP10} demonstrated a useful connection between cover
times and GFFs, by showing that, for any graph, the cover time is
equivalent, up to a universal multiplicative constant, to the
product of the number of edges and the supremum of the GFF. An
important ingredient in \cite{DLP10} is a version of the so-called
Dynkin Isomorphism theorem, which completely characterizes the
distribution of local times (closely related to the cover time)
using GFFs. All these connections seem to suggest that a detailed
study of fluctuations for one model should carry over to the other
with moderate work. A particular motivating example in this
direction is the case of squares in the 2D discrete lattice.
Recently, Bramson and Zeitouni \cite{BZ10} established a tightness
result for the supremum of GFF there (with proper centering, but no
other normalization), and further computed the centering up to an
additive constant.
One could hope that transferring this result to the cover time
problem is now ``purely technical''; an essential part of such a
program would be to verify that the supremum of the GFF correctly
predicts the second order correction for the (rescaled) cover time.
The present paper is a cautionary note in that direction.

We study the cover time on binary trees and obtain the sharp second
order term. Interestingly, we demonstrate that the latter is larger
than the corresponding one for the binary tree GFF.
Our result improves the estimate in \cite{Aldous91}, and complements
the result of \cite{BZ09}. We focus here on binary trees, but it
should be clear from the proof that the method applies to more
general Galton--Watson trees. Recently, Ding obtained the
asymptotics of the cover time on general trees via Gaussian free
fields, together with an
exponential concentration around its mean \cite{Ding11}. We note that the estimates 
in \cite{Ding11} did not yield the correct second order term for the
cover time on the binary tree.

Let $T = (V, E)$ be a binary tree rooted at $\rho$ of height $n$,
and consider a continuous-time random walk $(X_t)$ started at
$\rho$. Let $\tau_{\mathrm{cov}}$ be the first time when the random
walk visited every single vertex in the tree. Our main result is the
following.
\begin{theorem}\label{thm-cover-time}
Consider a random walk on a binary tree $T = (V, E)$ of height $n$,
started at the root $\rho$. Then, with high probability,
\begin{equation}\label{eq-cover-time}
\sqrt{\tfrac{\tau_{\mathrm{cov}}}{|E|}} = \sqrt{2\log 2}\cdot n -
\tfrac{\log n}{\sqrt{2\log 2}} + O((\log\log n)^8)\,.\end{equation}
\end{theorem}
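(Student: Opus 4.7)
The strategy is to use the generalized second Ray--Knight isomorphism to reduce the cover-time problem to an extreme-value problem for a Gaussian field on the tree, and then carry out a Bramson-style analysis to extract the sharp second-order correction. Parametrize time by the inverse local time at the root, $\tau(t)=\inf\{s:L_\rho(s)=t\}$: the cover event at time $\tau(t)$ is the event $\{L_v(\tau(t))>0 \text{ for every leaf } v\}$. By the second Ray--Knight identity together with the tree structure, this event can be translated into a condition on the discrete GFF $\phi$ on $T$ (with $\phi_\rho=0$); namely, along every root-to-leaf path $\rho=v_0,v_1,\ldots,v_n$, the level profile $(\phi_{v_k}+\sqrt{2t})_k$ must stay strictly positive. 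So, up to routine error terms coming from the isomorphism,
\[
\sqrt{2\,t_{\mathrm{cov}}} \;\approx\; \max_{v}\max_{0\le k\le n}\bigl(-\phi_{v_k}^{(v)}\bigr),
\]
a BRW-type extremum with an additional path-barrier constraint.

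The first-order term $\sqrt{2\log 2}\cdot n$ matches the plain BRW maximum and recovers Aldous's asymptotic. The substance of the proof is identifying the second-order correction. For the plain leaf maximum $\max_v \phi_v$, Bramson's analysis yields $-\tfrac{3}{2\sqrt{2\log 2}}\log n$; the cover-time extremum differs because it is governed by the absence of an \emph{absorbed} BRW path rather than by a terminal order statistic. The associated first-passage/ballot analysis for the branching walk changes the entropic-repulsion factor and produces the shifted coefficient $-\tfrac{1}{\sqrt{2\log 2}}\log n$. This shift, of size $\tfrac{1}{2\sqrt{2\log 2}}\log n$, is precisely the discrepancy from the GFF prediction that is advertised in the abstract.

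To implement this I would prove matching upper and lower bounds on $t_{\mathrm{cov}}$. For the upper bound, at $t$ slightly above the claimed threshold, bound the expected number of root-to-leaf paths violating the barrier by a Mogulskii/ballot estimate on a single path times $2^n$, and show this vanishes, so that coverage has occurred by $\tau(t)$ with high probability. For the lower bound, at $t$ slightly below, use a truncated second-moment computation on ``well-behaved'' paths (those staying close to an extremal tilted-linear profile) to exhibit, with positive probability, at least one surviving absorbed trajectory; a recursion on the binary tree promotes this to high probability. Finally, translate from root local time to clock time via the concentration $\tau(t)=|V|t(1+o(1))$, which, together with $|V|/|E|\to 1$, contributes only $O(1)$ to $\sqrt{\tau/|E|}$.

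The principal obstacle is the sharp Bramson-type analysis of the modified path-barrier extremum: one must adapt the ballot-problem/barrier estimates from the terminal-value setting to the first-passage setting and carry out careful truncations of extremal paths, which is the source both of the new coefficient $\tfrac{1}{\sqrt{2\log 2}}$ and of the polylog slack $O((\log\log n)^8)$. Once that extremum is controlled, the remaining steps (the Ray--Knight coupling, recursive decomposition over subtrees, and $\tau$-to-$t$ conversion) are essentially routine given the existing tree-GFF machinery.
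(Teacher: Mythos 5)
Your overall skeleton (parametrize by the inverse local time $\tau(t)$ at the root, barrier/ballot first-moment for the upper bound, truncated second moment plus a subtree recursion for the lower bound, then concentration of $\tau(t)$) matches the paper's. But the central mechanism you propose for the second-order term is wrong, and the reduction you base it on is not valid. The generalized second Ray--Knight isomorphism does not translate the cover event at time $\tau(t)$ into the event that the shifted GFF path $(\eta_{v_k}+\sqrt{2t})_k$ stays positive: the identity couples $L^v_{\tau(t)}+\tfrac12\eta_v^2$ with $\tfrac12(\eta_v+\sqrt{2t})^2$ for an \emph{independent} copy of $\eta$, so the event $\{\exists v:L^v_{\tau(t)}=0\}$ cannot be read off as a GFF barrier event. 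Moreover, even as a heuristic the translation would give the \emph{wrong} answer: for an honest Gaussian BRW, ``some path hits level $-\sqrt{2t}$ at some generation'' is just ``the maximum over all vertices exceeds $\sqrt{2t}$,'' and (as the paper notes) the all-vertex maximum has the same $-\tfrac{3}{2\sqrt{2\log 2}}\log n$ correction as the leaf maximum; the first-passage/absorption structure does not shift the entropic-repulsion coefficient from $3/2$ to $1$. Concretely, at the paper's lower threshold $t^-$ the expected number of Gaussian barrier paths reaching zero still vanishes (the needed boost $\mathrm{e}^{c(\log\log n)^8}$ is far smaller than $\sqrt n$), so your route could only reproduce the GFF centering and would be unable to prove the lower bound $\tau_{\mathrm{cov}}\ge\tau(t^-)$.

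The actual source of the discrepancy in the paper is the non-Gaussianity of the local-time field. Along a ray, conditioned on $L^{v_k}_{\tau(t)}=x^2$, the child local time is $\mathrm{PoiGamma}(x^2,1)$ (this is the Ray--Knight input the paper does use), and the density of the increment of $\sqrt{L}$ equals the Gaussian density times $1-\tfrac{w}{2\ell}+O(\tfrac{w^2+1}{\ell^2})$ (Lemma \ref{lem-density-ratio}, via the Bessel expansion). Multiplying these corrections along a path that descends from $\sqrt{t}\asymp\sqrt{\log 2}\,n$ to polylogarithmic height gives a Radon--Nikodym factor $\Theta(\sqrt{\ell_0/\ell_m})=\Theta(\sqrt n/\mathrm{polylog}\,n)$ (Claim \ref{claim-technical}); this polynomial boost of deep downcrossings relative to the Gaussian model is exactly what shifts the centering by $\tfrac{\log n}{4\sqrt{\log 2}}$ in $\sqrt t$, i.e.\ changes the coefficient from $\tfrac{3}{2\sqrt{2\log 2}}$ to $\tfrac{1}{\sqrt{2\log 2}}$. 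Without this quantitative density comparison your plan has no source for the advertised shift, so the ``hard part'' you flag (a Bramson-type analysis of an absorbed Gaussian BRW) would not close the gap. A minor further point: $\tau(t)=\sum_v d_vL^v_{\tau(t)}$ concentrates around $2|E|t$, not $|V|t$, which is where the factor turning $\sqrt{t}$ into $\sqrt{2t}$ (hence $\sqrt{2\log 2}\,n$) comes from.
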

At the cost of a more refined analysis, we believe that the error
term $O((\log \log n)^8)$ can be improved to $O(1)$.

To relate Theorem \ref{thm-cover-time} to the GFF
$\{\eta_v\}_{v\in V}$ on the tree, recall that the latter can be
defined as follows.
Let $\{X_e\}_{e\in E}$ be i.i.d.\ standard Gaussian variables and
set
$$\eta_v = \sum_{e: e\in \rho \leftrightarrow v} X_e,$$
where the sum is over all the edges that belong to the path from
$\rho$ to $v$. By adapting
Bramson's arguments on branching Brownian motion \cite{Bramson78} to
the discrete setup, as in Addario-Berry and Reed \cite{BR09}, one
can show that
\begin{equation}\label{eq-gaussian-free-field} \E
\sup_v \eta_v = \sqrt{2 \log 2}\cdot n - \tfrac{3 \log
n}{2\sqrt{2\log 2}} + O(1)\,.
\end{equation}
(The lower bound in \eqref{eq-gaussian-free-field} follows directly
from \cite[Theorem 3]{BR09}. The upper bound, that involves also the
internal nodes of the tree, requires the use of \cite[Lemma
13]{BR09} and a union bound over the levels.)

Comparing \eqref{eq-cover-time} and \eqref{eq-gaussian-free-field},
we do observe agreement in the first order and a discrepancy in the
second order terms.

Our proof uses ideas from  \cite{Bramson78} and is based on the
study of the local times associated with the random walk.  For any
$v\in V$, we define the local time $L^v_t$ to be the time that the
random walk spends at $v$ up to $t$, with a normalization by the
degree of $v$. More precisely,
$$L^v_t = \frac{1}{d_v} \int_0^t \one_{\{X_s = v\}} ds\,.$$
Define the inverse local time $\tau(t)$ to be the first time when
the local time at the root achieves $t$, by
$$\tau(t) = \inf\{s\geq 0 : L^\rho_s \geq t\}\,.$$
We will let $\tau(t)$ be defined as above throughout the paper. We
also set
\begin{equation}
\label{eq-tpm} t^+ = \big(\sqrt{\log 2} n - \tfrac{\log n}{2
\sqrt{\log 2}} + 100 \log\log n\big)^2 \mbox{ and }t^- =
\big(\sqrt{\log 2}n -\tfrac{\log n}{2\sqrt{\log 2}} - 100 (\log \log
n)^8\big)^2\,.
\end{equation}
The following is the key to the proof of
Theorem~\ref{thm-cover-time}.

\begin{theorem}\label{thm-cover-inverse}
Consider a random walk on a binary tree $T$ of height $n$, started
at the root $\rho$.
Then,
$$\P(\tau(t^-) \leq  \tau_{\mathrm{cov}} \leq \tau(t^+)) = 1+o(1) \,, \mbox{ as } n\to \infty\,.$$
\end{theorem}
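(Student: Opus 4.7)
The plan is to reformulate the sandwich event as a statement about the local-time process at time $\tau(t^\pm)$, reduce via a tree version of the Ray--Knight/Dynkin isomorphism to a survival question for a branching process, and then run Bramson-style barrier estimates. The key initial observation is that $\{\tau_{\mathrm{cov}} \leq \tau(t)\}$ equals $\{L^v_{\tau(t)} > 0 \text{ for every leaf } v\}$: on a tree, visiting any leaf forces the walk to traverse all of its ancestors. Along any root-to-leaf ray $\rho = v_0, v_1, \ldots, v_n$, the process $(L^{v_k}_{\tau(t)})_{k=0}^n$ is a discrete-time squared Bessel chain of dimension $0$ started from $t$ and absorbed at $0$, and at each internal vertex the two children's chains are conditionally independent given the parent's value. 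The cover event is thus the survival (no absorption at any vertex) of this binary branching squared-Bessel process, which is closely related to a branching random walk with an absorbing barrier.

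For the upper bound $\P(\tau_{\mathrm{cov}} \leq \tau(t^+)) = 1 - o(1)$, I would run a first-moment estimate sharpened by a Bramson-style curved barrier: bound $\P(\tau_{\mathrm{cov}} > \tau(t^+)) \leq \sum_v \P(L^v_{\tau(t^+)} = 0)$ and estimate each summand as the probability of absorption along the ray to $v$. A Bramson barrier inserted along the ray forces the typical absorbing trajectory into a thin tube and supplies the sharp logarithmic correction; the slack $100 \log \log n$ in $\sqrt{t^+}$ makes each summand smaller than $2^{-n} n^{-\omega(1)}$, so the union bound over the $2^n$ leaves tends to $0$. For the lower bound $\P(\tau_{\mathrm{cov}} \geq \tau(t^-)) = 1 - o(1)$, I would apply Paley--Zygmund to $N := \#\{\text{leaves } v : L^v_{\tau(t^-)} = 0\}$. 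The slack $100 (\log \log n)^8$ makes $\E N$ polynomially large; controlling the second moment $\E N^2 = O((\E N)^2)$ requires decomposing pairs of leaves by the depth of their least common ancestor and imposing a Bramson-type truncation on the shared segment, bounding the over-contribution of pairs that share a long common ancestor path.

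The principal obstacle is to recover the sharp second-order coefficient $1/\sqrt{2 \log 2}$, strictly smaller in magnitude than the $3/(2\sqrt{2 \log 2})$ coefficient governing the GFF maximum. Extracting this precise constant requires a carefully calibrated Bramson-style analysis adapted to the absorbed-Bessel setting, both in the first-moment barrier computation for the upper bound and in the conditional-Gaussian second-moment computation for the lower bound. The key subtlety is that the Dynkin isomorphism relates distributions rather than events, so the cover event is not identified pointwise with the GFF-minimum event: survival of the branching squared-Bessel process at barrier $0$ requires an additional ballot-type first-passage factor compared to merely bounding $\min_v \phi_v$, and this factor produces exactly the $\tfrac{\log n}{2\sqrt{2 \log 2}}$ shift between the cover-time and GFF second-order corrections. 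The asymmetry between the slacks $\log \log n$ (upper) and $(\log \log n)^8$ (lower) reflects the usual gap between first- and second-moment methods, and as the authors remark, can in principle be tightened to $O(1)$.
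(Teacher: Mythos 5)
Your reduction of the event $\{\tau_{\mathrm{cov}}\le\tau(t)\}$ to positivity of all leaf local times, the Ray--Knight identification of the local-time profile along a ray as a critical (dimension-$0$) squared-Bessel chain branching independently at each vertex, and the first-moment-with-Bramson-barrier upper bound are all exactly the paper's route: the paper's events $A$ and $B$ are your barrier truncation, and the estimate of the ``typical profile'' event $D_v$ is carried out by comparing the local-time increment density to a Gaussian density (Lemma~\ref{lem-density-ratio}, Claim~\ref{claim-technical}) and then invoking the Rayleigh law for the maximum of a Brownian bridge. That half of your plan is sound and essentially identical to the paper.

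The genuine gap is in your lower bound. Paley--Zygmund with the bound $\E N^2 = O((\E N)^2)$ only yields $\P(N>0)\ge c$ for a constant $c>0$, not the required $1-o(1)$; to get probability tending to $1$ you would need $\E N^2=(1+o(1))(\E N)^2$, which you neither claim nor could easily obtain here. Indeed, the paper's own truncated second-moment computation (Proposition~\ref{prop-lower-bound}, via Lemmas~\ref{lem-E-v}--\ref{lem-E-u-v} and the density comparison) produces a ratio of only order $1/\log^5 n$, because even after the barrier truncation the pairs with deep common ancestor contribute a polylogarithmic excess. The missing ingredient is the paper's multi-scale bootstrap (Steps 1--4 of the proof of Theorem~\ref{thm-lower-bound}): one cuts the tree at depths $n_1=30\log\log n$, $n_1+n_2$, and $n_1+n_2+n_3$, applies the second-moment proposition to the $2^{n_1}$ (respectively $2^{2\log\log n}$) \emph{independent} subtrees to convert the $1/\mathrm{polylog}$ success probability into a high-probability statement, and finishes with a constant-probability step over the last $n_4=10(\log\log n)^8$ levels; this is precisely where the $(\log\log n)^8$ slack in $t^-$ is consumed, not in inflating $\E N$. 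Your proposal contains no such boosting mechanism, so as written it would only deliver $\P(\tau_{\mathrm{cov}}\ge\tau(t^-))\ge c$. A secondary, more minor point: the $\tfrac{\log n}{2\sqrt{2\log 2}}$ shift relative to the GFF does not come from an ``additional ballot-type factor'' (the ballot constraint appears identically in both computations); in the paper it arises from the Radon--Nikodym factor $\prod_k\bigl(1-\tfrac{z_k}{2\ell_{k-1}}+O(\cdot)\bigr)=\Theta(1)\sqrt{\ell_0/\ell_m}\approx\sqrt{n}/\mathrm{polylog}\,n$ between the square-root local-time increments and Gaussian increments (Lemma~\ref{lem-density-ratio} and Claim~\ref{claim-technical}), which rescales the entropy count in both the first- and second-moment estimates.
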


In the next two sections, we prove the upper and lower bounds for
the preceding theorem respectively;  we conclude the paper by
deriving Theorem~\ref{thm-cover-time} from
Theorem~\ref{thm-cover-inverse}.\\

\noindent
{\bf Notation and convention:} Throughout, $C,c$ denote generic constants
that may change from line to line, but are independent of $n$. Further,
the
 phrase {\it with high probability} should be understood as the statement
{\it with probability approaching $1$ as $n\to\infty$}.

\section{Upper bound}

We establish an upper bound on the cover time in this section, as
formulated in the next theorem.
\begin{theorem}\label{thm-upper-bound}
With notation as in Theorem \ref{thm-cover-inverse}, we have
$$\P(\tau_{\mathrm{cov}} \leq \tau(t^+)) =
1+o(1)\,, \mbox{ as } n\to\infty\,.$$
\end{theorem}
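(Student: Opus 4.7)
The plan is to show that every vertex has positive local time by time $\tau(t^+)$ with high probability, i.e.\ $L^v_{\tau(t^+)}>0$ for all $v\in V$. Because an internal vertex is visited whenever some leaf of its subtree is, it is enough to establish this at leaves: writing $S_v:=\sqrt{L^v_{\tau(t^+)}}$ and noting $S_\rho=\sqrt{t^+}$, we aim to prove $\min_{v\in\partial T}S_v>0$, where $\partial T$ is the leaf set.

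The central tool is the second generalized Ray--Knight isomorphism, which identifies $\{L^v_{\tau(t^+)}\}$ in law with $\{\tfrac{1}{2}(\eta_v+\sqrt{2t^+})^2-\tfrac{1}{2}\tilde\eta_v^2\}$, for a GFF $\eta$ grounded at $\rho$ and an independent copy $\tilde\eta$; since $\sqrt{2t^+}$ is large, to leading order $S_v\approx \sqrt{t^+}+\eta_v/\sqrt{2}$. As $\eta_v=\sum_{e\in\rho\leftrightarrow v}X_e$ with i.i.d.\ standard Gaussian edge weights, $\{\eta_v/\sqrt{2}\}_{v\in V}$ is exactly a branching random walk (BRW) on the binary tree with $\mathcal{N}(0,\tfrac{1}{2})$ per-step increments. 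The sharp minimum asymptotics for this BRW (Bramson \cite{Bramson78}, Addario-Berry--Reed \cite{BR09}) give $\min_{v\in\partial T}\eta_v/\sqrt{2}=-\sqrt{\log 2}\,n+\tfrac{3\log n}{4\sqrt{\log 2}}+O_{\P}(1)$; combined with $\sqrt{t^+}$ from \eqref{eq-tpm}, this leaves a positive buffer of order $\tfrac{\log n}{4\sqrt{\log 2}}+100\log\log n$ in $\min_{v\in\partial T}S_v$, which is much larger than the $O_{\P}(1)$ fluctuations.

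To make this rigorous I would bound $\E\,\#\{v\in\partial T:S_v\le 0\}$ by a first-moment argument: for each leaf use the Ray--Knight representation to reduce $\P(S_v\le 0)$ to a Gaussian tail, and then recover the polynomial factor lost in the naive union bound over $2^n$ leaves via a Bramson-style barrier, restricting to rays along which the approximating random walk stays below a curved envelope of the form $\sqrt{t^+}-\sqrt{\log 2}\,k+c\log(k\wedge(n-k))$ and invoking the corresponding ballot-type estimate. The hardest part is converting the heuristic $S_v\approx \sqrt{t^+}+\eta_v/\sqrt{2}$ into quantitative bounds uniform over $V$: the squared-Bessel-type chain $(\sqrt{L^{v_k}})$ is genuinely Gaussian only while $L^{v_k}$ remains of order $t^+$, so one must first preclude large downward excursions of the local-time profile on any root--leaf ray by a separate barrier argument, and then control the deviation from the Gaussian limit carefully enough that the target $\log\log n$-scale correction survives. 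A secondary subtlety is that the tree-independence between sibling subtrees required for the BRW picture holds only conditionally on the local time at their common parent, so the first-moment computation must be organized generation by generation using the Markov property of $\{L^v_{\tau(t^+)}\}$ along the tree.
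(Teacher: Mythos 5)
Your skeleton does track the paper's proof: reduce to leaves, exclude atypical local-time profiles along each root--leaf ray by a barrier event, then run a first-moment bound with a ballot/bridge estimate after comparing the chain $\big(\sqrt{L^{v_k}_{\tau(t^+)}}\big)_k$ to a Gaussian walk, all organized through the Markov property of local times along the tree. But two things are off, one minor and one fatal as written. The minor one: the identity you invoke is not the second Ray--Knight theorem. That theorem says $\{L^v_{\tau(t)}+\tfrac12\tilde\eta_v^2\}_v$ has the law of $\{\tfrac12(\eta_v+\sqrt{2t})^2\}_v$ with $\tilde\eta$ an independent GFF; it does not allow you to represent $L^v_{\tau(t)}$ itself as $\tfrac12(\eta_v+\sqrt{2t})^2-\tfrac12\tilde\eta_v^2$ with independent copies (that expression is not even nonnegative). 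What the paper actually uses is only the Markov property along a ray: conditioned on $L^{v_k}_{\tau(t)}=x^2$, the next local time is $\mathrm{PoiGamma}(x^2,1)$, together with an exact density comparison for such increments.

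The fatal problem is the quantitative calibration, and it is precisely the point of the paper. The approximation $S_v\approx\sqrt{t^+}+\eta_v/\sqrt2$ fails at the $\log n$ scale for the extremes: by Lemma~\ref{lem-density-ratio} the density of an increment of $\sqrt{L}$ is the Gaussian density times $1-\tfrac{w}{2\ell}+O((w^2+1)/\ell^2)$, and although each such factor is close to $1$, along a ray forced down to local time $O(\log^4 n)$ they telescope (Claim~\ref{claim-technical}) to $\Theta\big(\sqrt{\ell_0/\ell_{n'}}\big)=\Theta(\sqrt n/\log n)$ --- a polynomially large inflation of the per-leaf probability over the Gaussian prediction. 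Consequently your ``buffer of order $\tfrac{\log n}{4\sqrt{\log 2}}$'' is illusory: if the Gaussian picture were accurate to second order, the same barrier-plus-ballot first moment would prove coverage already at $\sqrt t=\sqrt{\log 2}\,n-\tfrac{3\log n}{4\sqrt{\log 2}}+O(\log\log n)$, which contradicts Theorem~\ref{thm-lower-bound}, since $\sqrt{t^-}$ lies strictly above that level. Note also that the unrestricted first moment already diverges at $t^+$: $2^n\,\P(L^v_{\tau(t^+)}=0)=2^n e^{-t^+/n}\asymp n\,(\log n)^{-200\sqrt{\log 2}}\to\infty$, so the barrier is not merely recovering a polynomial factor on top of a comfortable margin --- after the bridge/ballot factor and the $\Theta(\sqrt n/\log n)$ density correction, the surviving margin is only a power of $\log n$, supplied by the $100\log\log n$ term in \eqref{eq-tpm}. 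So the missing ingredient is the explicit, quantitative density comparison (the Bessel-function asymptotics) and the bookkeeping showing that the non-Gaussian correction shifts the threshold at scale $\log n$; treating the deviation from Gaussianity as a small error to be ``controlled'' cannot close the argument.
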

Theorem \ref{thm-upper-bound}
 is equivalent to
the statement that at time $\tau(t^+)$, all the leaf-nodes have
positive local times, with high probability. To this end, we
consider a leaf-node of local time $0$ with typical and non-typical
profiles, respectively. For the latter, we show its unlikeliness
directly; for the former, we prove it is a rare event by comparing
to the same type of event for Gaussian free field.

\subsection{Unlikeliness for non-typical profile}
As preparation, we prove a large deviation result which will be used
to control the pairwise concentration of local times.

\begin{defn}
    For $r, \lambda >0$, let $N$ be a Poisson
variable with mean $r$ and $Y_i$ be i.i.d.\ exponential variables
with mean $\lambda$.
Then,
 the random variable $Z = \sum_{i=1}^N
Y_i$ is said to follow the distribution $\mathrm{PoiGamma}(r, \lambda)$,
and we write $Z\sim
\mathrm{PoiGamma}(r, \lambda)$.
\end{defn}

\begin{lemma}\label{lem-large-deviation}
For $\alpha,r>0$,  let $Z
\sim
\mathrm{PoiGamma}(r, \lambda)$. Then for $\alpha < \lambda r$,
\begin{equation}\label{eq-lower-tail} \P (Z \leq \lambda r - \alpha)
\leq \exp\big( 2\sqrt{r(r - \alpha/\lambda)} +
\alpha/\lambda-2r\big)\,.
\end{equation}
Furthermore, for all $\alpha
> 0$,
\begin{equation}\label{eq-upper-tail} \P(Z \geq \lambda r + \alpha)\leq
 \exp\big(2\sqrt{r (r+ \alpha/\lambda)} - 2r - \alpha/\lambda\big)\,.
\end{equation}
\end{lemma}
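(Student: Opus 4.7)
The plan is a standard Chernoff argument based on the explicit moment generating function of a compound Poisson sum. No probabilistic subtlety is needed; the content is an exact optimization that happens to land on the clean exponents in the statement.

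First I would compute the MGF of $Z$. Since $Y_i \sim \mathrm{Exp}(\lambda)$ has MGF $\E e^{s Y_i} = (1-\lambda s)^{-1}$ for $s < 1/\lambda$, conditioning on $N$ and using the Poisson MGF gives
\begin{equation*}
\E e^{s Z} = \exp\Bigl(r\bigl(\E e^{s Y_1} - 1\bigr)\Bigr) = \exp\!\left(\frac{r\lambda s}{1 - \lambda s}\right), \qquad s < 1/\lambda,
\end{equation*}
and likewise $\E e^{-s Z} = \exp(-r\lambda s/(1+\lambda s))$ for all $s > 0$. These are the only distributional inputs.

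For the lower tail \eqref{eq-lower-tail}, I would write $\P(Z \le \lambda r - \alpha) \le e^{s(\lambda r - \alpha)} \E e^{-sZ}$ for $s > 0$, i.e.\ minimize the exponent
\begin{equation*}
\f(s) = s(\lambda r - \alpha) - \frac{r \lambda s}{1 + \lambda s}
\end{equation*}
over $s > 0$. Substituting $u = \lambda s$ and $\beta = \alpha/\lambda$ turns this into $u(r-\beta) - ru/(1+u)$; setting the derivative to zero gives $(1+u)^2 = r/(r-\beta)$, i.e.\ $1+u = \sqrt{r/(r-\beta)}$, which lies in $(1,\infty)$ precisely because $\alpha < \lambda r$. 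Plugging back in, the two pieces evaluate to $\sqrt{r(r-\beta)} - (r-\beta)$ and $r - \sqrt{r(r-\beta)}$ respectively, so the minimum equals $2\sqrt{r(r-\beta)} + \beta - 2r$, which is the exponent on the right-hand side of \eqref{eq-lower-tail}.

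The upper tail \eqref{eq-upper-tail} is completely parallel: apply Markov to $\P(Z \ge \lambda r + \alpha) \le e^{-s(\lambda r + \alpha)} \E e^{sZ}$ for $0 < s < 1/\lambda$ and minimize
\begin{equation*}
\psi(s) = -s(\lambda r + \alpha) + \frac{r \lambda s}{1 - \lambda s}.
\end{equation*}
With $u = \lambda s$ and $\beta = \alpha/\lambda$, the optimizer is $1-u = \sqrt{r/(r+\beta)} \in (0,1)$, giving minimum $2\sqrt{r(r+\beta)} - 2r - \beta$, matching the stated bound. There is no real obstacle here; the only thing to be mildly careful about is verifying that the optimal $s$ lies in the admissible range ($s > 0$ in the lower tail, $0 < s < 1/\lambda$ in the upper tail), which comes out automatically from the hypotheses $\alpha < \lambda r$ and $\alpha > 0$.
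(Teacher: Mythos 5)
Your proof is correct and is essentially the paper's argument: an exponential Chernoff bound using the exact compound-Poisson moment generating function, optimized exactly (the paper's $\theta$ is your $u=\lambda s$, so the optimizations coincide). One small bookkeeping slip: at the lower-tail optimizer the second piece is $-ru/(1+u)=\sqrt{r(r-\beta)}-r$, not $r-\sqrt{r(r-\beta)}$; with that sign corrected the two pieces do sum to the stated exponent $2\sqrt{r(r-\beta)}+\beta-2r$.
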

\begin{proof}
As in the definition of the $\mathrm{PoiGamma}(r, \lambda)$
distribution, let $N$ be  Poisson variable with mean $r$ and let $Y$
be an independent exponential variable with mean $\lambda$. For
$\theta>0$, we have
$$\E\mathrm{e}^{-\theta Z/\lambda} = \E (\E \mathrm{e}^{-\theta Y / \lambda})^N
 = \E (1/(1+\theta))^N = \mathrm{e}^{-\frac{\theta r}{1+\theta}}\,.$$
Combined with Markov's inequality, it follows that
$$\P(Z \leq \lambda r - \alpha) = \P(\mathrm{e}^{-\theta Z/\lambda}
\geq \mathrm{e}^{-\theta(\lambda r-\alpha)/\lambda}) \leq
\mathrm{e}^{-\frac{\theta r}{1+\theta}} \cdot
\mathrm{e}^{\theta(r-\alpha/\lambda)} = \exp\big(\tfrac{\theta^2
r}{1+\theta} - \tfrac{\theta \alpha}{\lambda}\big)\,.$$ For $\alpha
< \lambda r$, optimizing the exponent at $\theta = \sqrt{\frac{r}{r
- \alpha/\lambda}} -1$ leads to inequality \eqref{eq-lower-tail}.

To prove \eqref{eq-upper-tail}, consider $0<\theta<1$. We have
$$\E\mathrm{e}^{\theta Z/\lambda} = \E (\E \mathrm{e}^{\theta Y / \lambda})^N =
 \E (1/(1-\theta))^N = \mathrm{e}^{\frac{\theta r}{1-\theta}}\,.$$
Another application of Markov's inequality gives that
$$\P(Z \geq \lambda r + \alpha) \leq \P(\mathrm{e}^{\theta Z/\lambda}
\geq \mathrm{e}^{\theta (\lambda r+\alpha)/\lambda}) =
\exp\big(\tfrac{\theta^2 r}{1- \theta} - \tfrac{\theta
\alpha}{\lambda} \big)\,.$$ Optimizing the exponent at $\theta = 1 -
\sqrt{\tfrac{r}{r+ \alpha/\lambda}}$, we deduce the inequality
\eqref{eq-upper-tail}.
\end{proof}

\noindent{\bf Remark.} The right side of \eqref{eq-lower-tail} can
be bounded by $\mathrm{e}^{-\alpha^2/4\lambda^2 r}$. In this form,
it is closely related to the discrete time bound in
\cite[Lemma 5.2]{KKLV00}.

We have the following immediate and useful corollary.
\begin{cor}\label{cor-large-deviation}
With notation as in Lemma \ref{lem-large-deviation}, we have for any
$\beta>0$,
\begin{equation}\label{eq-lower-tail-bis} \P (\sqrt{Z}
    \leq (1-\beta)\sqrt{\lambda r})\leq \mathrm{e}^{-r\beta^2}\,,
\end{equation}
and
\begin{equation}\label{eq-upper-tail-bis}
\P(\sqrt{Z} \geq (1+\beta)\sqrt{\lambda r}) \leq
\mathrm{e}^{-r\beta^2}\,.
\end{equation}
\end{cor}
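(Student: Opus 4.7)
The plan is to reduce both tail bounds on $\sqrt{Z}$ to the two tail bounds for $Z$ already proved in Lemma \ref{lem-large-deviation}, by squaring. Indeed, the event $\{\sqrt{Z} \leq (1-\beta)\sqrt{\lambda r}\}$ is, for $\beta \in (0,1)$, the same as $\{Z \leq (1-\beta)^2\lambda r\}$, and likewise $\{\sqrt{Z}\geq(1+\beta)\sqrt{\lambda r}\} = \{Z\geq(1+\beta)^2\lambda r\}$. After that the work is a one-line algebraic simplification of the exponents in \eqref{eq-lower-tail} and \eqref{eq-upper-tail}, so there is no real obstacle here; the main point is making the right substitution.

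For \eqref{eq-lower-tail-bis}, I would set $\alpha = (2\beta-\beta^2)\lambda r$, so that $\lambda r - \alpha = (1-\beta)^2\lambda r$ and $r - \alpha/\lambda = r(1-\beta)^2$. Provided $0<\beta<1$ (which ensures $\alpha<\lambda r$, the hypothesis of Lemma \ref{lem-large-deviation}), we get $\sqrt{r(r-\alpha/\lambda)} = r(1-\beta)$, so the exponent $2\sqrt{r(r-\alpha/\lambda)} + \alpha/\lambda - 2r$ in \eqref{eq-lower-tail} reduces to $2r(1-\beta) + (2\beta-\beta^2)r - 2r = -r\beta^2$. For $\beta\geq 1$ the inequality is either trivial (when $\beta>1$, the event is empty) or reduces to $\P(Z=0)=\P(N=0)=e^{-r}$ (when $\beta=1$), which matches the claimed bound $e^{-r\beta^2}=e^{-r}$.

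For \eqref{eq-upper-tail-bis}, I would set $\alpha = (2\beta+\beta^2)\lambda r$, so that $\lambda r+\alpha=(1+\beta)^2\lambda r$ and $r+\alpha/\lambda = r(1+\beta)^2$. Then $\sqrt{r(r+\alpha/\lambda)} = r(1+\beta)$, and the exponent $2\sqrt{r(r+\alpha/\lambda)} - 2r - \alpha/\lambda$ in \eqref{eq-upper-tail} becomes $2r(1+\beta) - 2r - (2\beta+\beta^2)r = -r\beta^2$, yielding the desired bound directly.

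In short, the corollary is essentially a change of variables: the two sub-Gaussian bounds for $\sqrt{Z}$ with deviation $\beta\sqrt{\lambda r}$ correspond to the Lemma's bounds for $Z$ at the specific deviations $\alpha = (2\beta\mp\beta^2)\lambda r$, and the algebra in \eqref{eq-lower-tail}--\eqref{eq-upper-tail} is engineered precisely so that both exponents collapse to $-r\beta^2$. No concentration argument beyond Lemma \ref{lem-large-deviation} is required.
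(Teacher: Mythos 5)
Your proof is correct and is exactly the intended (and in the paper, unstated, since the corollary is called ``immediate'') argument: square the event and substitute $\alpha = (2\beta\mp\beta^2)\lambda r$ into \eqref{eq-lower-tail} and \eqref{eq-upper-tail}, whereupon both exponents collapse to $-r\beta^2$. Your extra care with the cases $\beta\geq 1$ for the lower tail is a welcome touch the paper glosses over.
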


For $k\in \N$, we denote by $V_k\subseteq V$ the set of vertices in
$k$-th level of the tree. We next show that it is unlikely to have a
too small local time for a vertex in intermediate levels.
\begin{lemma}\label{lem-local-curve}
With notation as in Theorem \ref{thm-cover-inverse}, define
\begin{equation}
\label{def-A}
A = \cup_{k=1}^{n-\log^2 n}\cup_{u\in V_k} \{L^u_{\tau(t^+)} \leq
((1-k/n)\sqrt{t^+} - 3\log n)^2\}\,.\end{equation} Then, $\P(A) =
o(1)$ as $n\to \infty$.
\end{lemma}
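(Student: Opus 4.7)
Fix $u \in V_k$ and let $\rho = u_0, u_1, \ldots, u_k = u$ be the unique root-to-$u$ path. Set $S_j := \sqrt{L^{u_j}_{\tau(t^+)}}$, so $S_0 = \sqrt{t^+}$. By the excursion decomposition of the CTSRW on the tree (the mechanism underlying the generalized second Ray--Knight identity), the sequence $(L^{u_j}_{\tau(t^+)})_{j=0}^k$ is a Markov chain, and conditional on $L^{u_{j-1}}_{\tau(t^+)} = s$ the variable $L^{u_j}_{\tau(t^+)}$ is $\mathrm{PoiGamma}(r_j,\lambda_j)$-distributed with $r_j\lambda_j$ proportional to $s$ and $\lambda_j = \Theta(1)$ determined by the local degrees. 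Applying Corollary~\ref{cor-large-deviation} conditionally at each step (with $\beta = m/S_{j-1}$) gives $\P(S_j \leq S_{j-1} - m \mid S_{j-1}) \leq \exp(-m^2/\lambda_j)$: each one-step negative increment is sub-Gaussian with an $O(1)$ parameter, \emph{uniformly} in $S_{j-1}$.

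\textbf{Tail bound and union bound.} Chaining these one-step bounds (by a telescoping MGF computation along the natural filtration, noting $\E[S_j - S_{j-1}\mid S_{j-1}] = O(1/S_{j-1})$ which is negligible in the regime $S_{j-1} \gg 1$) yields
$$\P\bigl(S_k \leq \sqrt{t^+} - M\bigr) \leq \exp\bigl(-M^2/k\bigr)$$
for any $M > 0$; the constant $1$ in the exponent is sharp and matches the Gaussian approximation $\sqrt{L^{u_k}_{\tau(t^+)}} \approx \sqrt{t^+} + \xi\sqrt{k/2}$, $\xi \sim N(0,1)$, coming from Ray--Knight. Taking $M = M_k := (k/n)\sqrt{t^+} + 3\log n$ and using $\sqrt{t^+} = \sqrt{\log 2}\,n(1+o(1))$ from the definition of $t^+$, a direct expansion gives $M_k^2/k \geq (\log 2)\,k + 6\sqrt{\log 2}\,\log n + O((\log n)^2/k)$. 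Since $|V_k| = 2^k = \exp((\log 2)k)$, a union bound at level $k$ yields
$$\P\Bigl(\bigcup_{u\in V_k}\{L^u_{\tau(t^+)} \leq ((1-k/n)\sqrt{t^+} - 3\log n)^2\}\Bigr) \leq \exp\bigl(-6\sqrt{\log 2}\,\log n\bigr) = n^{-6\sqrt{\log 2}}\,,$$
and summing over $k \in \{1,\ldots,n-\log^2 n\}$ gives $\P(A) = O(n^{1 - 6\sqrt{\log 2}}) = o(1)$ since $6\sqrt{\log 2} > 1$.

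\textbf{Main obstacle.} The delicate point is the chaining step: turning the relative-deviation Corollary into an absolute-deviation sub-Gaussian tail on $\sqrt{t^+} - S_k$, uniform in the trajectory, with a constant sharp enough that the exponent cancels the volume factor $(\log 2)k$ at each level. Indeed, the $(\log 2)k$ term in the exponent exactly matches $\log|V_k|$, so there is no room to waste any constant in front: if one only got $\exp(-M^2/(c k))$ with $c > 1$, the union-bound exponent would contain the positive term $(\log 2)(1-1/c)\,k$ and the sum over $k$ would diverge. The restriction $k \leq n - \log^2 n$ ensures $S_{j-1}$ remains of order $n$ throughout the path, so that the PoiGamma parameters stay large and the sub-Gaussian approximation remains accurate at the required tail scale.
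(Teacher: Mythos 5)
Your reduction to the tail estimate $\P(\sqrt{L^u_{\tau(t^+)}}\le \sqrt{t^+}-M)\le e^{-M^2/k}$ for $u\in V_k$, followed by the union bound, is the right shape, but the way you propose to obtain that estimate has a genuine gap, and it is precisely the point you yourself flag as ``the main obstacle.'' From Corollary~\ref{cor-large-deviation} you only get a conditional \emph{tail} bound $\P(S_j\le S_{j-1}-m\mid S_{j-1})\le e^{-m^2}$ at each step; the ``telescoping MGF computation'' that is supposed to turn $k$ such tail bounds into $\P(S_k\le\sqrt{t^+}-M)\le e^{-M^2/k}$ is never carried out, and it cannot be waved through: converting a tail bound into an exponential-moment bound costs a multiplicative constant in the variance proxy in general, and, as you note yourself, any constant $c>1$ in $e^{-M^2/(ck)}$ leaves a positive term $(1-1/c)(\log 2)k$ in the union-bound exponent, so the argument collapses at levels $k$ of order $n$. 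To chain losslessly you would need a bound on $\E[e^{-\theta(S_j-S_{j-1})}\mid S_{j-1}]$ with the exact Gaussian constant (plus control of the drift correction of order $\theta/S_{j-1}$), which requires genuinely new estimates on the Laplace transform of $\sqrt{Z}$ for $Z\sim\mathrm{PoiGamma}$; none of this appears in your proposal. There is also a smaller arithmetic slip: since $t^+/n^2=\log 2-\tfrac{\log n}{n}+o(\tfrac{\log n}{n})$, the expansion of $M_k^2/k$ contains an extra $-\tfrac{k}{n}\log n$ term, so at levels $k\asymp n$ the logarithmic margin is $(6\sqrt{\log 2}-1)\log n$ rather than $6\sqrt{\log 2}\log n$; the conclusion survives, but your displayed inequality is not correct as stated.

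The chaining is in any case unnecessary, and this is exactly how the paper's proof avoids the constant issue: for $u\in V_k$ the marginal law of $L^u_{\tau(t^+)}$ is \emph{exactly} $\mathrm{PoiGamma}(t^+/k,\,k)$ (the number of excursions from the root that reach $u$ before the root's local time hits $t^+$ is Poisson with mean $t^+/R_{\mathrm{eff}}(\rho,u)=t^+/k$, and each such excursion contributes an independent exponential amount of local time at $u$ with mean $k$); equivalently, your one-step kernels compose exactly into this law. A single application of \eqref{eq-lower-tail} (or of \eqref{eq-lower-tail-bis} with $r=t^+/k$, $\lambda=k$, $\beta=((k/n)\sqrt{t^+}+3\log n)/\sqrt{t^+}$) then yields $\exp\big(-\tfrac1k((k/n)\sqrt{t^+}+3\log n)^2\big)\le 2^{-k}n^{-2}$ with no constant lost, and summing $2^k\cdot 2^{-k}n^{-2}$ over $k\le n-\log^2 n$ gives $\P(A)\le 1/n=o(1)$. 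If you prefer to keep the path-by-path viewpoint, the fix is to use this exact composition (i.e., the exact formula $\E e^{-\theta Z/\lambda}=e^{-\theta r/(1+\theta)}$ from the proof of Lemma~\ref{lem-large-deviation}) rather than chaining one-step tail estimates.
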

\begin{proof}
Throughout the proof, we write $t$ for $t^+$. Consider $u\in V_k$
such that $k\leq n-\log^2 n$. It is clear that $L^u_{\tau(t)}$ has
the distribution $\mathrm{PoiGamma}(t/k, k)$.
Applying \eqref{eq-lower-tail}, we obtain that
$$\P\left(Z \leq ((1-k/n)\sqrt{t} - 3\log n)^2\right) \leq \exp\big(-\tfrac{1}{k}(\sqrt{t}k/n + 3\log n)^2\big) \leq 2^{-k}n^{-2}\,.$$
Now a simple union bound gives that
\begin{equation*}
 \P(A)\leq  \sum_{k=1}^{n - \log^2 n} 2^k 2^{-k} n^{-2} \leq  1/n = o(1)\,. \qedhere
\end{equation*}
\end{proof}



For $v\in V$ and $1\leq k< n$, let $v_k\in V_k$ be the ancestor of
$v$ in the $k$-th level. Define
\begin{equation}\label{eq-def-gamma}
\gamma(k) = \min\{\sqrt{k}\log k, \, \sqrt{n-k} \log (n-k)\} +2
\,.\end{equation}

\begin{lemma}
    With notation as in Theorem
\ref{thm-cover-inverse},
define
\begin{equation}\label{eq-def-B}
B= \left\{\exists v\in V_n, \exists k<  n- \log^2 n:
L^{v}_{\tau(t^+)} = 0, \left|\sqrt{L^{v_{k+1}}_{\tau(t^+)}} -
\sqrt{L^{v_{k}}_{\tau(t^+)}}\right| \geq
\tfrac{\sqrt{L^{v_k}_{\tau(t^+)}}}{\gamma(k)}\right\} \cap
A^c\,.\end{equation} Then, $\P(B) = o(1)$  as  $n \to \infty$.
\end{lemma}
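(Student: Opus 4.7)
I plan to bound $\P(B)$ by a union bound over pairs $(v,k)$ with $v\in V_n$ and $k<n-\log^2 n$, exploiting the Markov structure of the local time process along the path $\rho=v_0,v_1,\dots,v_n=v$. By the Ray--Knight-type description of local times for the tree walk, the sequence $(L^{v_j}_{\tau(t^+)})_{j\ge 0}$ is a Markov chain with transition $L^{v_{j+1}}\mid L^{v_j}=s\sim \mathrm{PoiGamma}(s,1)$. This is mean-preserving, and the same PoiGamma Laplace transform $\E[e^{-\theta Z}]=\exp(-\theta r\lambda/(1+\theta\lambda))$ iterated down the subtree rooted at $v_{k+1}$ yields
$$\P\!\left(L^v_{\tau(t^+)}=0\,\big|\,L^{v_{k+1}}=s'\right) = e^{-s'/(n-k-1)},$$
so summing over the $2^{n-k-1}$ leaves beneath $v_{k+1}$ bounds the conditional expected number of uncovered leaves in that subtree by $2^{n-k-1} e^{-L^{v_{k+1}}/(n-k-1)}$.

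The deviation input is Corollary~\ref{cor-large-deviation} applied to the PoiGamma transition with $\beta=1/\gamma(k)$, $r=L^{v_k}$, $\lambda=1$:
$$\P\!\left(\big|\sqrt{L^{v_{k+1}}}-\sqrt{L^{v_k}}\big|\ge \tfrac{\sqrt{L^{v_k}}}{\gamma(k)}\,\Big|\,L^{v_k}\right)\le 2e^{-L^{v_k}/\gamma(k)^2},$$
which on $A^c$ is at most $2e^{-L^*_k/\gamma(k)^2}$ since $L^{v_k}\ge L^*_k:=((1-k/n)\sqrt{t^+}-3\log n)^2$. Using the Markov property (conditional on $L^{v_{k+1}}$, the deviation event is independent of $\{L^v_{\tau(t^+)}=0\}$), gathering the $2^{k+1}$ identically distributed paths to $v_{k+1}\in V_{k+1}$, and summing over $v\in V_n$ below each $v_{k+1}$, I would obtain a single-path bound of the form
$$\P(B)\le \sum_{k=1}^{n-\log^2 n} 2^n\cdot \E\!\left[\mathbf{1}_{A^c}\,\mathbf{1}_{\mathrm{dev\ at\ } k}\,e^{-L^{v_{k+1}}/(n-k-1)}\right].$$

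To estimate the expectation I would condition on $L^{v_k}=s\ge L^*_k$ and perform an exponential tilt by $e^{-L^{v_{k+1}}/(n-k-1)}$, which transforms the PoiGamma$(s,1)$ transition into PoiGamma$(s(n-k-1)/(n-k),(n-k-1)/(n-k))$ with normalizing constant $e^{-s/(n-k)}$. Since $\gamma(k)\ll n-k$ throughout the range $1\le k<n-\log^2 n$, the tilt shifts the mean of $\sqrt{L^{v_{k+1}}}$ by only $O(\sqrt{s}/(n-k))$, which is much smaller than the deviation threshold $\sqrt{s}/\gamma(k)$; another application of Corollary~\ref{cor-large-deviation} under the tilted law then bounds the (tilted) deviation probability by $2e^{-cs/\gamma(k)^2}$ for an explicit $c>0$. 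Together this yields
$$\P(B)\le C\sum_{k=1}^{n-\log^2 n} 2^n\, e^{-L^*_k/(n-k)-cL^*_k/\gamma(k)^2}.$$

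\noindent\textbf{Main obstacle.} The delicate step is to verify that the last sum is $o(1)$, which is exactly where the precise form of $\gamma(k)$ is used. Expanding $t^+/n=n\log 2-\log n+200\sqrt{\log 2}\log\log n+o(1)$ one finds that $2^n e^{-L^*_k/(n-k)}$ contributes roughly $2^k$ times a polynomial factor $n^{O(1)}$ coming from the $-3\log n$ correction in $L^*_k$; this must be absorbed by $e^{-cL^*_k/\gamma(k)^2}$. The calibration $\gamma(k)=\min\{\sqrt{k}\log k,\sqrt{n-k}\log(n-k)\}+2$ is designed exactly for this purpose: the $\sqrt{k}\log k$ branch handles $k\le n/2$ (where $L^*_k/\gamma(k)^2\gtrsim (n-k)^2/(k\log^2 k)\gg k$), while the $\sqrt{n-k}\log(n-k)$ branch handles $k\ge n/2$ (where the restriction $k<n-\log^2 n$ keeps $L^*_k/\gamma(k)^2\gtrsim (n-k)/\log^2(n-k)\ge \log^2 n/\log^2\log n$, easily beating the polynomial prefactor). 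A careful case split along $k\lessgtr n/2$, together with the standard estimate $L^*_k\ge (1-k/n)^2 t^+ - 6(1-k/n)\sqrt{t^+}\log n$, then closes the argument.
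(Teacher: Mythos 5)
Your reduction to the displayed sum is structurally sound (the Markov/Ray--Knight step, the exponential tilt turning $\mathrm{PoiGamma}(s,1)$ into $\mathrm{PoiGamma}(s\frac{n-k-1}{n-k},\frac{n-k-1}{n-k})$ with normalizer $e^{-s/(n-k)}$, and the application of Corollary~\ref{cor-large-deviation} under the tilted law are all fine), but the final sum is \emph{not} $o(1)$, and the sketch of why it should be contains a false step. On $A^c$ you only know $L^{v_k}_{\tau(t^+)}\ge L^*_k$, and you evaluate both factors at the worst case $s=L^*_k$ while bounding $\P(L^{v_k}\ge L^*_k)\le 1$; this discards the probability cost, roughly $2^{-k}$, of $\sqrt{L^{v_k}}$ actually being down near the barrier, and that factor is exactly what is needed to survive the union bound over the $2^n$ leaves. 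Quantitatively, $2^n e^{-L^*_k/(n-k)}\approx 2^k n^{O(1)}$, as you note, but $e^{-cL^*_k/\gamma(k)^2}$ cannot absorb $2^k$ in the intermediate range: for $k\asymp n$ one has $L^*_k/\gamma(k)^2\asymp \min\{(n-k)^2/(k\log^2 k),\,(n-k)/\log^2(n-k)\}=O(n/\log^2 n)\ll k\log 2$, so e.g.\ the $k=n/2$ term of your bound is of order $2^{n/2}n^{O(1)}e^{-cn/\log^2 n}\to\infty$. Concretely, your claim that $(n-k)^2/(k\log^2 k)\gg k$ for all $k\le n/2$ fails once $k\gtrsim n/\log n$, and in the branch $k\ge n/2$ you compare $e^{-cL^*_k/\gamma(k)^2}$ only against the polynomial prefactor and forget the $2^k$.

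The repair is to not decouple at the barrier: keep the factor $e^{-L^{v_{k+1}}/(n-k-1)}$ (equivalently, after the tilt, $e^{-L^{v_k}/(n-k)}$) inside the expectation, whose unconditional mean is exactly $\P(L^v_{\tau(t^+)}=0)=e^{-t^+/n}\le 2^{-n}\,n^{1+o(1)}$, and use $A^c$ only through the deviation factor $e^{-cL^{v_k}/\gamma(k)^2}\le e^{-cL^*_k/\gamma(k)^2}\le e^{-\log^{3/2}n}$, which holds since $L^*_k\ge c(n-k)^2$ while $\gamma(k)^2\le (n-k)\log^2(n-k)+O(\sqrt{n-k}\log(n-k))$ and $n-k\ge\log^2 n$. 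That yields a per-pair bound $e^{-t^+/n}e^{-\log^{3/2}n}\le 2^{-n}/n^2$, and the union bound over $v$ and $k$ closes. This is precisely the accounting in the paper's proof, which works conditionally on $\{L^v_{\tau(t^+)}=0,\,L^{v_k}_{\tau(t^+)}=x^2\}$, shows the deviation there has probability at most $e^{-\log^{3/2}n}$ uniformly over $x\ge (1-k/n)\sqrt{t^+}-3\log n$ (its annulus decomposition over $j$ plays the same role as your tilt, controlling the positive correlation between a downward jump at level $k+1$ and the uncovering event), and then multiplies by $\P(L^v_{\tau(t^+)}=0)$.
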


\begin{proof}
    We continue to write $t=t^+$.
Consider $v\in V_n$ and $k< n- \log^2 n$. Note that conditioned on
$L^{v_k}_{\tau(t)}$, the collection of random variables
$\{L^{v_{k+j}}_{\tau(t)}\}_{j\geq 0}$ possess the same law as
$\{L^{v_{k+j}}_{\tau^{v_k}(L^{v_k}_{\tau(t)})}\}_{j\geq 0}$ (this is
an instance of the second Ray-Knight theorem in this context).
Abusing notation, this implies in particular that conditioned on
$\{L^{v_k}_{\tau(t)}=x\}$, $L^{v_{k+1}}_{\tau(t)}$ has distribution
$\mathrm{PoiGamma}(x^2,1)$. (We will employ such an abuse of
notation repeatedly throughout the paper.) Fixing $x \geq
(1-k/n)\sqrt{t} - 3\log n$,
an application of Corollary~\ref{cor-large-deviation} gives for $
j\geq 1$,
$$\P\left(j \tfrac{x}{\gamma(k)}\leq \left|\sqrt{L^{v_{k+1}}_{\tau(t)}} -
x\right| \leq (j+1)\tfrac{x}{\gamma(k)}, L^v_{\tau(t)} = 0 \Big|
L^{v_k}_{\tau(t)} = x^2\right) \leq 2 \cdot \mathrm{e}^{- j^2
x^2/(\gamma(k))^2} \cdot \mathrm{e}^{-\frac{x^2(1 -
(j+1)/\gamma(k))^2}{n-k}}\,.
$$
Note that the right hand side in the above decays geometrically with
$j$. Thus, summing over $j$, we obtain that
$$\P\left(\left|\sqrt{L^{v_{k+1}}_{\tau(t)}} -
x\right| \geq \tfrac{x}{\gamma(k)}, L^v_{\tau(t)} = 0 \Big|
L^{v_k}_{\tau(t)} = x^2\right) \leq 4 \mathrm{e}^{-\frac{x^2}{n-k}}
\mathrm{e}^{\frac{4x^2}{(n-k) \gamma(k)}} \mathrm{e}^{-\frac{x^2}{
(\gamma(k))^2}} \leq 4 \mathrm{e}^{-\frac{x^2}{n-k}}
 \mathrm{e}^{-\frac{x^2}{2
(\gamma(k))^2}}\,.$$ Noting that $\P(L^v_{\tau(t)} = 0 \mid
L^{v_k}_{\tau(t)} = x^2) = \mathrm{e}^{-\frac{x^2}{n-k}}$, we obtain
that
$$\P\left(\left|\sqrt{L^{v_{k+1}}_{\tau(t)}} -
x\right| \geq \tfrac{x}{\gamma(k)} \Big| L^v_{\tau(t)} = 0 ,
L^{v_k}_{\tau(t)} = x^2\right) \leq 2\mathrm{e}^{-\frac{x^2}{2
(\gamma(k))^2}} \leq \mathrm{e}^{-\log^{3/2} n}\,,$$ where the last
nequality follows from the fact that $x \geq (1-k/n)\sqrt{t} - 3\log
n)$ and $k\leq n - \log^2 n$. Therefore,
$$\P\left(A^c, L^v_{\tau(t)} = 0, \left|\sqrt{L^{v_{k+1}}_{\tau(t)}} -
\sqrt{L^{v_{k}}_{\tau(t)}}\right| \geq
\tfrac{\sqrt{L^{v_k}_{\tau(t)}}}{\gamma(k)}\right) \leq
\P(L^v_{\tau(t)} = 0)\cdot \mathrm{e}^{-\log^{3/2} n} =
\mathrm{e}^{-t/n}\mathrm{e}^{-\log^{3/2} n} \leq
\frac{2^{-n}}{n^2}\,.$$ At this point, a simple union bound
completes the proof.
\end{proof}

\subsection{Unlikeliness for typical profile}

We next compare the density of local times and Gaussian variables.
This comparison of density is of significance for the proof of both
upper and lower bounds.

\begin{lemma}\label{lem-density-ratio}
    For $\ell>0$, let $Z \sim \mathrm{PoiGamma}(\ell^2,1)$ and let
$f(\cdot)$ denote the density function of $\sqrt{Z}$ on $R_+$, with
$f(0) = \P(Z = 0)$. Denote by $W$ a standard Gaussian variable, and
denote by $g(\cdot)$ the density function of $W/\sqrt{2}$. Then, for
any $w$ such that $|w|\leq \ell/2$, we have
$$f(\ell + w) = \big(1 - \tfrac{w}{2\ell} + O\big(\tfrac{w^2+1}{\ell^2}\big)\big) \cdot g(w)\,.$$
\end{lemma}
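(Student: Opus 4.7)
The plan is to obtain $f$ in closed form via a modified Bessel function and then combine the large-argument asymptotic of $I_1$ with a Taylor expansion in $w/\ell$.

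First, I would write $Z = \sum_{i=1}^N Y_i$ with $N \sim \mathrm{Poisson}(\ell^2)$ and $Y_i$ i.i.d.\ $\mathrm{Exp}(1)$, so that conditionally on $\{N=n\}$ with $n\ge1$ the random variable $Z$ has the $\mathrm{Gamma}(n,1)$ density $z^{n-1}e^{-z}/(n-1)!$. Summing the Poisson mixture yields, for $z>0$,
$$
h(z) = e^{-\ell^2-z}\sum_{n=1}^\infty \frac{\ell^{2n} z^{n-1}}{n!(n-1)!} = e^{-\ell^2-z}\,\frac{\ell}{\sqrt{z}}\, I_1(2\ell\sqrt{z}),
$$
where the series is recognised as $(\ell/\sqrt z)\, I_1(2\ell\sqrt z)$ via $I_1(x)=\sum_{k\ge0}(x/2)^{2k+1}/(k!(k+1)!)$. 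Changing variables $s=\sqrt z$ gives the clean formula $f(s) = 2\ell\, e^{-\ell^2-s^2}\, I_1(2\ell s)$ on $(0,\infty)$.

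Next I would substitute $s=\ell+w$ and invoke the classical asymptotic $I_1(x) = \frac{e^x}{\sqrt{2\pi x}}\bigl(1+O(1/x)\bigr)$ at $x=2\ell(\ell+w)$. Under the hypothesis $|w|\le \ell/2$, we have $\ell+w\in[\ell/2,3\ell/2]$, so $x\ge \ell^2$ and the Bessel error is $O(1/\ell^2)$ uniformly in the allowed range. The factor $e^{2\ell(\ell+w)}$ cancels against $e^{-\ell^2-(\ell+w)^2}=e^{-2\ell^2-2\ell w - w^2}$ up to $e^{-w^2}$, and the prefactor $2\ell$ meets $\sqrt{4\pi\ell(\ell+w)}$ to leave
$$
f(\ell+w) = \sqrt{\frac{\ell}{\ell+w}}\cdot \frac{e^{-w^2}}{\sqrt\pi}\bigl(1+O(1/\ell^2)\bigr).
$$
Since $W/\sqrt 2$ has density $g(w)=\pi^{-1/2}e^{-w^2}$, this reads $f(\ell+w) = (1+w/\ell)^{-1/2}\, g(w)\,(1+O(1/\ell^2))$, and a Taylor expansion $(1+w/\ell)^{-1/2} = 1 - w/(2\ell) + O(w^2/\ell^2)$, legitimate for $|w/\ell|\le 1/2$, delivers the claimed
$$
f(\ell+w) = \Bigl(1 - \tfrac{w}{2\ell} + O\bigl(\tfrac{w^2+1}{\ell^2}\bigr)\Bigr) g(w).
$$

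The main obstacle is really bookkeeping rather than any deep estimate: one has to ensure the $O(1/x)$ remainder in the Bessel asymptotic is genuinely an absolute (i.e.\ non-asymptotic) bound uniform over $x\ge \ell^2$, which follows from the standard integral representation or from the Debye expansion of $I_1$. Once that is in hand, it is routine to check that the additive combination of the $O(1/\ell^2)$ Bessel error and the $O(w^2/\ell^2)$ Taylor error produces an $O((w^2+1)/\ell^2)$ multiplicative remainder, since the Gaussian factor $g(w)$ is bounded (and in fact decays) throughout $|w|\le \ell/2$ so that multiplicative and additive errors behave comparably.
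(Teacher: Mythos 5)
Your proof is correct and follows essentially the same route as the paper: both derive the closed form $f(s)=2\ell\,\mathrm{e}^{-\ell^2-s^2}I_1(2\ell s)$ from the Poisson--Gamma mixture, apply the large-argument asymptotic of the modified Bessel function $I_1$, and obtain the factor $1-\tfrac{w}{2\ell}$ from expanding $\sqrt{\ell/(\ell+w)}$. The only cosmetic difference is that the paper carries the explicit $-3/(8x)$ term of the Bessel expansion, which, as in your argument, is anyway absorbed into the $O((w^2+1)/\ell^2)$ error.
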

\begin{proof}
Write $y = \ell + w$, and let $h(\cdot)$ be the density function of
$Z$. Then for $z>0$, we have
$$h(z) = \sum_{k=1}^\infty \mathrm{e}^{-\ell^2} \frac{\ell^{2k}}{k!} \mathrm{e}^{-z}\frac{z^{k-1}}{(k-1)!}\,.$$
Applying a change of variables, we obtain that
$$f(y) = 2y\sum_{k=1}^\infty \mathrm{e}^{-\ell^2} \frac{\ell^{2k}}{k!} \mathrm{e}^{-y^2}\frac{y^{2(k-1)}}{(k-1)!}
= 2 \ell\mathrm{e}^{-(\ell^2+y^2)}\sum_{k=0}^\infty
\frac{(y\ell)^{2k+1}}{k! (k+1)!} = 2\ell \mathrm{e}^{-(\ell^2+y^2)}
I_1(2y\ell)\,,$$ where $I_1(x)$ is a modified Bessel function
defined by
$$I_1(x) \deq \sum_{k=0}^\infty \frac{(x/2)^{2k+1}}{k! (k+1)!}\,.$$
For the modified Bessel function $I_1(x)$, the following expansion
is known when $|x|$ is large (see \cite{AS65}):
$$I_1(x) = \frac{\mathrm{e}^x}{\sqrt{2\pi x}} \big(1 - \tfrac{3}{8x} + O\big(\tfrac{1}{x^2}\big)\big)\,.$$
Plugging into the preceding expansion, we get that
$$f(y) = 2\ell \mathrm{e}^{-(\ell^2+y^2)} \frac{\mathrm{e}^{2y\ell}}{\sqrt{2\pi 2y\ell}} \Big(1 - \tfrac{3}{8 \cdot 2y\ell} + O\big(\tfrac{1}{y^2\ell^2}\big)\Big)
= \frac{\mathrm{e}^{-w^2}}{\sqrt{\pi}} \big(1- \tfrac{w}{2\ell} +
O\big(\tfrac{w^2+1}{\ell^2}\big)\big)\,.$$ Combined with the fact
that $g(w) = \frac{1}{\sqrt{\pi}}\mathrm{e}^{-w^2}$, the desired
estimate follows immediately.
\end{proof}

We single out the next calculation, which will be used repeatedly.
\begin{claim}\label{claim-technical}
Consider $z_i, \ell_i\in \R$ with $\ell_{i+1} = \ell_i + z_i$ for
$i=0, \ldots, m-1$ such that $|z_i| \leq \ell_i/2$ for all $i$.
Assume that $\sum_i \frac{z_i^2 + 1}{\ell_{i-1}^2} = O(1)$. Then,
$$\mbox{$\prod_{i=1}^m$} \big(1 - \tfrac{z_i}{2\ell_{i-1}} + O\big(\tfrac{z_i^2 + 1}{\ell_{i-1}^2}\big)\big) = \Theta(1) \cdot \frac{\sqrt{\ell_0}}{\sqrt{\ell_m}}\,.$$
\end{claim}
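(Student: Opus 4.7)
The plan is to take logarithms of the product and show that the sum telescopes, up to a bounded error, to $\tfrac{1}{2}\log(\ell_0/\ell_m)$.

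First I would use the hypothesis $|z_i|\leq \ell_i/2$ (which, together with $\ell_{i+1}=\ell_i+z_i$, keeps consecutive $\ell_i$'s within a factor of $2$ of each other) to conclude that the argument inside each factor of the product is bounded away from $0$ and from above by a constant. This justifies applying $\log(1+x)=x+O(x^2)$ to each factor:
\[
\log\prod_{i=1}^m\bigl(1-\tfrac{z_i}{2\ell_{i-1}}+O(\tfrac{z_i^2+1}{\ell_{i-1}^2})\bigr)
= -\sum_{i=1}^m \tfrac{z_i}{2\ell_{i-1}} + \sum_{i=1}^m O\bigl(\tfrac{z_i^2+1}{\ell_{i-1}^2}\bigr).
\]
The second sum is $O(1)$ by assumption, so it only contributes a $\Theta(1)$ multiplicative factor after exponentiation.

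Next I would handle the main term $-\sum_i z_i/(2\ell_{i-1})$ by comparing it to a telescoping logarithm. Since $\ell_i=\ell_{i-1}+z_i$ (after adjusting indexing to match the statement), we have
\[
\log\tfrac{\ell_i}{\ell_{i-1}} = \log\bigl(1+\tfrac{z_i}{\ell_{i-1}}\bigr) = \tfrac{z_i}{\ell_{i-1}} + O\bigl(\tfrac{z_i^2}{\ell_{i-1}^2}\bigr),
\]
again valid because $|z_i/\ell_{i-1}|$ is bounded. Summing over $i$ and using the hypothesis $\sum_i(z_i^2+1)/\ell_{i-1}^2=O(1)$ once more, I get
\[
\sum_{i=1}^m \tfrac{z_i}{\ell_{i-1}} = \log\tfrac{\ell_m}{\ell_0} + O(1).
\]

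Combining the two displays yields
\[
\log\prod_{i=1}^m\bigl(1-\tfrac{z_i}{2\ell_{i-1}}+O(\tfrac{z_i^2+1}{\ell_{i-1}^2})\bigr)
= -\tfrac{1}{2}\log\tfrac{\ell_m}{\ell_0} + O(1) = \log\sqrt{\tfrac{\ell_0}{\ell_m}} + O(1),
\]
and exponentiating gives the claimed $\Theta(1)\cdot\sqrt{\ell_0/\ell_m}$. There is no real obstacle here; the only thing to keep track of is that all three error contributions (from $\log(1+x)=x+O(x^2)$ applied to each factor, from the second-order term in the telescoping expansion, and from the original $O((z_i^2+1)/\ell_{i-1}^2)$ terms) are individually summable to $O(1)$ under the single hypothesis $\sum_i(z_i^2+1)/\ell_{i-1}^2=O(1)$, which is where the bound $|z_i|\leq\ell_i/2$ is essential since it prevents $z_i^3/\ell_{i-1}^3$ terms from being larger than $z_i^2/\ell_{i-1}^2$.
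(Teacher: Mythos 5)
Your proposal is correct and follows essentially the same route as the paper: both arguments expand each factor as $\log(1+x)=x+O(x^2)$ (justified by $|z_i/\ell_{i-1}|$ being bounded), absorb the error terms using the hypothesis $\sum_i(z_i^2+1)/\ell_{i-1}^2=O(1)$, and identify $\sum_i z_i/\ell_{i-1}$ with $\log(\ell_m/\ell_0)+O(1)$ via the telescoping product $\prod_i(1+z_i/\ell_{i-1})$. No gaps; nothing further is needed.
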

\begin{proof}
On one hand, note that
$$\frac{\ell_m}{\ell_0} = \mbox{$\prod_{i=1}^{m}$}\frac{\ell_{i}}{\ell_{i-1}} = \mbox{$\prod_{i=1}^{m}$}\big(1 + \tfrac{z_i}{\ell_{i-1}}\big) = \exp\big(\mbox{$\sum_{i=1}^{m}$}\tfrac{z_i}{\ell_{i-1}} + O\big(\mbox{$\sum_{i=1}^{m}$}\tfrac{z_i^2}{\ell_{i-1}^2}\big)\big)=\exp\big(\mbox{$\sum_{i=1}^{m}$}\tfrac{z_i}{\ell_{i-1}} + O(1)\big)\,.$$
On the other hand, we have
\begin{align*}\mbox{$\prod_{i=1}^{m}$}\big(1 - \tfrac{z_i}{2\ell_{i-1}} +
O\big(\tfrac{z_i^2 + 1}{\ell_{i-1}^2}\big)\big) &=
\exp\big(-\mbox{$\sum_{i=1}^{m}$}\tfrac{z_i}{2\ell_{i-1}} +
O\big(\mbox{$\sum_{i=1}^{m}$}\tfrac{z_i^2 + 1}{\ell_{i-1}^2}\big)\big)\\
& =\exp\big(-\mbox{$\sum_{i=1}^{m}$}\tfrac{z_i}{2\ell_{i-1}} +
O(1)\big) = \sqrt{\tfrac{\ell_0}{\ell_m}}\exp(O(1))\,.
\end{align*}
Combining these estimates completes the proof.
\end{proof}

We next demonstrate that it is unlikely to have a leaf-node of local
time 0, even with a typical profile for local times along the path
from $\rho$ to the leaf.
\begin{lemma}
     With notation as in Theorem
\ref{thm-cover-inverse}
and $A,B$ as in \eqref{def-A} and \eqref{eq-def-B},
%
define
\begin{equation}D_v = \{L^v_{\tau(t^+)} = 0\} \setminus (A \cup B)\,,
\mbox{ for } v\in V_n\,.\end{equation} Then, $\P(D_v) = o(2^{-n})$.
\end{lemma}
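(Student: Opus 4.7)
The plan is to reduce the estimate on the local-time profile to a Bramson-style ballot probability for a Gaussian random walk via the density comparison of Lemma \ref{lem-density-ratio}. Fix $v \in V_n$, set $k_0 = n - \log^2 n$, $\ell_k = \sqrt{L^{v_k}_{\tau(t^+)}}$ (so $\ell_0 = \sqrt{t^+}$), and $z_k = \ell_{k+1} - \ell_k$. By the second Ray--Knight theorem (as invoked in the proof of the previous lemma), $(\ell_k)_{0\le k\le k_0}$ is Markov with transition density $f_{\ell_k}(\ell_{k+1})$ equal to that of $\sqrt{Z}$ for $Z \sim \mathrm{PoiGamma}(\ell_k^2, 1)$, while conditionally on $\ell_{k_0}$ one has $\P(L^v_{\tau(t^+)}=0 \mid \ell_{k_0}) = e^{-\ell_{k_0}^2/\log^2 n}$. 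Hence
\[\P(D_v) \le \int_G e^{-\ell_{k_0}^2/\log^2 n}\prod_{k=0}^{k_0-1} f_{\ell_k}(\ell_{k+1})\,d\ell_1\cdots d\ell_{k_0},\]
where $G = \{\ell_k \ge (1-k/n)\sqrt{t^+} - 3\log n,\ |z_k| \le \ell_k/\gamma(k)\}$ carves out $A^c\cap B^c$ along the path.

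On $G$ the constraint $|z_k|\le \ell_k/\gamma(k) \le \ell_k/2$ puts us in the regime of Lemma \ref{lem-density-ratio}: $f_{\ell_k}(\ell_{k+1}) = (1 - \frac{z_k}{2\ell_k} + O(\frac{z_k^2+1}{\ell_k^2}))g(z_k)$, with $g$ the density of $W/\sqrt{2}$. The definitions give $\sum_k z_k^2/\ell_k^2 \le \sum_k \gamma(k)^{-2} = O(1)$, and the lower barrier forces $\ell_k \gtrsim n-k$ when $n-k \ge \log^2 n$, so $\sum_k \ell_k^{-2} = O(1/\log^2 n) = o(1)$. Claim \ref{claim-technical} then telescopes the density-ratio corrections into $\Theta(\sqrt{\ell_0/\ell_{k_0}})$, reducing the bound to
\[\P(D_v) \le C\sqrt{\ell_0}\cdot \E\!\left[\tfrac{1}{\sqrt{S_{k_0}}}\,e^{-S_{k_0}^2/\log^2 n}\,\mathbf{1}_G(S)\right],\]
where $(S_k)$ is the Gaussian random walk started at $\sqrt{t^+}$ with i.i.d.\ $W/\sqrt{2}$ increments.

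For the last step I would change variables $M_k = \sqrt{t^+} - S_k$, a centered Gaussian walk of variance $1/2$ per step starting at $0$, constrained by $G$ to stay below the linear upper barrier $(k/n)\sqrt{t^+} + 3\log n$ up to $k=k_0$. A standard Bramson/Addario-Berry--Reed ballot bound yields, uniformly in $m \le \sqrt{t^+}$,
\[\P\!\left(M_j \le \tfrac{j}{n}\sqrt{t^+} + 3\log n\ \forall j\le k_0,\ M_{k_0}\in[m, m+1]\right) \le C\,\tfrac{(\log n)(\sqrt{t^+}-m+\log n)}{k_0^{3/2}}\,e^{-m^2/k_0}.\]
Combining this with the remaining weights $(\sqrt{t^+}-m)^{-1/2}\,e^{-(\sqrt{t^+}-m)^2/\log^2 n}$ and the $\sqrt{\ell_0}\sim\sqrt{n}$ prefactor, a Gaussian saddle-point analysis in $m$ (the saddle lies at $m^*\approx \sqrt{t^+}(1-\log^2 n/n)$ with width $\asymp \log n$ and produces saddle value $-t^+/n$) yields $\P(D_v) = O(2^{-n}\cdot (\log n)^{-c})$ for some $c>0$, after substituting $\sqrt{t^+}$ from \eqref{eq-tpm}. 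The main obstacle is executing this ballot estimate and saddle-point calculation with sharp prefactors: the $-\log n/(2\sqrt{\log 2})$ second-order term in $\sqrt{t^+}$ must be absorbed by the $k_0^{-3/2}$ ballot denominator together with the Gaussian and Jacobian factors, so that the $100\log\log n$ excess in $\sqrt{t^+}$ manifests as the decisive polylogarithmic gain beyond $2^{-n}$ needed after the union bound over the $2^n$ leaves.
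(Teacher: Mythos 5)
Your proposal is correct and follows essentially the same route as the paper: the Ray--Knight Markov structure along the path, the density comparison of Lemma \ref{lem-density-ratio} telescoped by Claim \ref{claim-technical} into a $\sqrt{\ell_0/\ell_{k_0}}$ factor, a bridge-below-a-line (ballot/Rayleigh) estimate for the Gaussian walk, and the completing-the-square/saddle computation producing $e^{-t^+/n}$, with the $100\log\log n$ excess in $\sqrt{t^+}$ supplying the polylogarithmic gain over $2^{-n}$. The only cosmetic differences are that the paper bounds the barrier probability by the squared end-gap via the Rayleigh law for the bridge maximum (rather than your two-gap ballot form) and bounds $\sqrt{\ell_0/\ell_{k_0}}$ uniformly by $O(\sqrt{n}/\log n)$ instead of keeping it inside the integral.
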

\begin{proof}
    Again, we write $t=t^+$.
Write $n' = n - \log^2 n$. Let $\Omega\subseteq \R^{n'}$ be such
that for ${z_1, \ldots, z_{n'}} \in \Omega$, we have
$$\cap_{k=1}^{n'}\left\{
\sqrt{L^{v_{k}}_{\tau(t)}} -
\sqrt{L^{v_{k-1}}_{\tau(t)}}=z_k\right\}
\subseteq D_v\,.$$ Let $\alpha(\cdot)$ and $\beta(\cdot)$ be density
functions for $(\sqrt{L^{v_{k}}_{\tau(t)}} -
\sqrt{L^{v_{k-1}}_{\tau(t)}})_{1\leq k\leq n'}$ and
$(\eta_{v_k}/\sqrt{2} - \eta_{v_{k-1}}/\sqrt{2})_{1\leq k\leq n'}$,
respectively. Denote by $\ell_k = \sqrt{t} + \sum_{i=1}^k z_i$. Note
that for $(z_1, \ldots, z_{n'})\in \Omega$, we have
$$\mbox{$\sum_{i=1}^{n'}$} \tfrac{1 + z_i^2}{\ell_{i-1}^2} = O(1)
\mbox{$\sum_{i=1}^{n'}$} \big(\tfrac{1}{(n-i)^2} +
\tfrac{1}{(\gamma(i))^2}\big) = O(1)\,.$$ Applying
Lemma~\ref{lem-density-ratio} and Claim~\ref{claim-technical}, we
obtain that
\begin{align*}\tfrac{\alpha(z_1, \ldots, z_{n'})}{\beta(z_1, \ldots, z_{n'})}
= &\mbox{$\prod_{i=1}^{n'}$}\big(1 - \tfrac{z_i}{2\ell_{i-1}} +
O\big(\tfrac{z_i^2 + 1}{\ell_{i-1}^2}\big)\big) = \Theta(1)
\tfrac{\sqrt{n}}{\log n}\,.\end{align*} Therefore, we obtain that
\begin{align}\label{eq-P-D}\P(D_v) = \int_{\Omega} \alpha(z_1,
\ldots, z_{n'}) \P(L^v_{\tau(t)} &= 0 \mid
\sqrt{L^{v_{n'}}_{\tau(t)}} = \ell_{n'}) dz \leq O(1)
\frac{\sqrt{n}}{\log n}\int_\Omega \beta(z_1, \ldots, z_{n'})
\mathrm{e}^{-\frac{\ell_{n'}^2}{n-n'}}dz\,.\end{align} Write $s =-
(n'/n)\sqrt{t} - 3\log n$. Let $\beta(x) = \int_{\{\ell_{n'} = x\}}
\beta(z_1, \ldots, z_{n'})dz$ for $x\geq s$. Note that
\begin{equation}\label{eq-beta}
\beta(x) = \frac{1}{\sqrt{\pi n'}}\mathrm{e}^{-\frac{x^2}{n'}}
\P(\eta_{v_k}/\sqrt{2} \geq -(k/n)\sqrt{t} - 3 \log n \mbox{ for }
1\leq k\leq n'\mid \eta_{v_{n'}}/\sqrt{2} = x)\,.\end{equation}
Conditioning on $\eta_{v_{n'}}/\sqrt{2} = x$, we have
$$\{(\eta_{v_k} /\sqrt{2})_{1\leq k\leq n'} \mid \eta_{v_{n'}}/\sqrt{2} = x\} \stackrel{law}{=} \{(W_k/\sqrt{2} + (k/n')x)_{1\leq k\leq n'} \}\,,$$
where $(W_r)_{0\leq r\leq n'}$,
is a Brownian Bridge of length $n'$,
i.e., a Brownian motion conditioned
on hitting $0$ at both time $0$ and $n'$. It is well-known that the
maximum of a Brownian bridge $(W_r)$ on $[0, q]$ follows the
Rayleigh
distribution (see, e.g., \cite{SW86}), i.e.,
\begin{equation}\label{eq-reighlay}
\P(\mbox{$\max_{0\leq r\leq q}$} W_r \geq \lambda) =
\mathrm{e}^{-\frac{2 \lambda^2}{q}}\,, \mbox{ for all } \lambda \geq
0\,.
\end{equation}
 Therefore, we obtain that
\begin{align*}\P&(\eta_{v_k}/\sqrt{2} \geq -(k/n)\sqrt{t} - 3 \log n \mbox{
for } 1\leq k\leq n'\mid \eta_{v_{n'}}/\sqrt{2} = x) \leq
\P(\mbox{$\min_{r\leq n'}$}
W_r/\sqrt{2} \geq -3 \log n - (x-s)) \\
&= \P( \mbox{$\max_{r\leq n'}$} W_r \leq \sqrt{2}(3\log n+(x-s)))
\leq \tfrac{4(3\log n+(x-s))^2}{n'}\,.\end{align*}  Plugging the
above estimate into \eqref{eq-beta}, we obtain that
$$\beta(x)\leq \frac{4(3\log
n+(x-s))^2}{(n')^{3/2}}\mathrm{e}^{-\frac{x^2}{n'}}\,.$$
Together
with \eqref{eq-P-D}, we obtain that
$$\P(D_v) \leq O(1)
\int_s^\infty \frac{(\log n+(x-s))^2}{n' \log n}
\mathrm{e}^{-\frac{x^2}{n'}} \mathrm{e}^{-\frac{(\sqrt{t} +
x)^2}{n-n'}}dx \leq O(1)  \int_{-\infty}^\infty \frac{(\log
n+(x-s))^2}{n' \log n} \mathrm{e}^{-\frac{x^2}{n'}}
\mathrm{e}^{-\frac{(\sqrt{t} + x)^2}{n-n'}}dx\,.$$ Using
the change of
variables $y = x + \frac{\sqrt{t} n'}{n}$, we obtain that
\begin{align*}
\P(D_v) \leq O(1) \frac{\mathrm{e}^{-\frac{t}{n}}}{n \log n} \cdot
\int_{-\infty}^\infty(4\log n+y)^2 \mathrm{e}^{-(\frac{1}{n'} +
\frac{1}{n-n'})y^2} dy = 2^{-n} \cdot o(\log^{-6} n)\,,
\end{align*}
where we used the fact that $n' = n - \log^2 n$, completing the
proof.
\end{proof}

\noindent {\bf Proof of Theorem~\ref{thm-upper-bound}.} The proof
now follows trivially. Since $\sum_{v\in V_n}\P(D_v) = 2^n \cdot
2^{-n} o(1) = o(1)$ as well as $\P(A) = o(1)$ and $\P(B) = o(1)$, we
see that with high probability, every leaf-node has positive local
time by $\tau(t)$, implying the desired upper bound on cover time.

\section{Lower bound}
This section is devoted to the proof of the following lower bound on the
cover time for a binary tree $T$.
\begin{theorem}\label{thm-lower-bound}
      With notation as in Theorem
\ref{thm-cover-inverse},
%
$$\P(\tau_{\mathrm{cov}} \geq \tau(t^-)) = 1+ o(1)\,, \mbox { as } n \to \infty\,.$$
\end{theorem}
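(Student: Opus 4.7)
The plan is to prove Theorem~\ref{thm-lower-bound} by showing that, with high probability, some leaf $v\in V_n$ has $L^v_{\tau(t^-)}=0$. I would attack this via a truncated second moment method on an appropriate count of unvisited leaves, and then upgrade from constant to $1-o(1)$ probability by splitting the tree into (conditionally) independent subtrees via the second Ray-Knight identity.

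For the moment estimates, let $G_v$ be the event that $L^v_{\tau(t^-)}=0$ and that the profile $(\sqrt{L^{v_k}_{\tau(t^-)}})_{k=0}^{n-\log^2 n}$ remains within a tube of radius $\gamma(k)$ (as in \eqref{eq-def-gamma}) around the linear interpolation $k\mapsto(1-k/n)\sqrt{t^-}$, and set $N=\sum_{v\in V_n}\1_{G_v}$. The computation of $\E[N]$ mirrors the upper bound's analysis of $\P(D_v)$: condition along the path $\rho\to v$, use Lemma~\ref{lem-density-ratio} to replace the density of each increment of $\sqrt{L^{v_k}_{\tau(t^-)}}$ by the Gaussian density, collapse the resulting product of correction factors via Claim~\ref{claim-technical}, and finally invoke the Brownian bridge identity \eqref{eq-reighlay} to evaluate the barrier probability. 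The $-100(\log\log n)^8$ correction in $t^-$ supplies exactly the slack that was absent in $t^+$, so that $\E[N]$ now diverges (polynomially in $n$) rather than vanishing.

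For the second moment, decompose pairs $(u,v)\in V_n\times V_n$ by the level $k$ of their most recent common ancestor $w$. By the second Ray-Knight theorem, in the form used to prove the lemma defining $B$ in \eqref{eq-def-B}, conditional on $L^w_{\tau(t^-)}$ the two descendant profiles $(L^{u_{k+j}}_{\tau(t^-)})_{j\ge 1}$ and $(L^{v_{k+j}}_{\tau(t^-)})_{j\ge 1}$ are independent, each distributed as a single root-to-leaf profile launched at height $k+1$ with initial local time $L^w_{\tau(t^-)}$. Thus $\P(G_u\cap G_v)$ factors into a shared prefix contribution and two independent tail contributions; the tube restriction keeps the sum over $k$ of these pair terms under control, yielding $\E[N^2]\le C(\E[N])^2$. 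Paley-Zygmund then gives $\P(N\ge 1)\ge c_0>0$.

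To boost the constant probability to $1-o(1)$, I split the tree at an intermediate level $k_0=\lfloor\log^2 n\rfloor$ and apply the second moment argument separately inside each subtree $T_u$ rooted at $u\in V_{k_0}$. By Ray-Knight, conditional on the local times at level $k_0$, the subtree processes are independent, each of the same form as the full process but launched at height $k_0$ with initial clock $L^u_{\tau(t^-)}$. The analogue of the upper-bound machinery (the event $A$ of \eqref{def-A}, with $t^-$ in place of $t^+$) shows that with high probability a positive fraction of $u\in V_{k_0}$ satisfy $\sqrt{L^u_{\tau(t^-)}}\ge(1-k_0/n)\sqrt{t^-}-O(\log n)$; for each such $u$ the Paley-Zygmund bound applies inside $T_u$, and the resulting $\Theta(2^{k_0})$ conditionally independent attempts yield a failure probability of at most $(1-c_0)^{\Theta(2^{k_0})}=o(1)$. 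The main obstacle is the second moment calibration: verifying that the tube radius $\gamma(k)$ is simultaneously wide enough to admit a polynomial-size family of good leaves and narrow enough to make the sum over $k$ of pair correlations remain $O((\E[N])^2)$, a balance directly analogous to the one struck by \cite{Bramson78} in the BBM setting and now transferred to the local-time framework through Lemma~\ref{lem-density-ratio}.
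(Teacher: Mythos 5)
Your architecture (a truncated second moment transferred to the Gaussian field via Lemma~\ref{lem-density-ratio}, then amplification over conditionally independent subtrees) is the right family of ideas, but the truncation you chose makes the key step fail. The tube of radius $\gamma(k)$ from \eqref{eq-def-gamma} around the straight line is far too wide for $\E[N^2]\le C(\E[N])^2$. Concretely, suppose the common ancestor $w=u\wedge v$ lies at level $k\asymp n$ and $\sqrt{L^w_{\tau(t^-)}}$ sits a distance $\gamma\le\gamma(k)$ below the line (your tube allows this). The shared prefix pays only an extra factor of order $\mathrm{e}^{-2\sqrt{\log 2}\,\gamma-\gamma^2/k}$, while each of the two conditionally independent descendant branches (second Ray--Knight) gains a factor of order $\mathrm{e}^{2\sqrt{\log 2}\,\gamma}$ in the probability of ending at an unvisited leaf, since $\P(L^v_{\tau(t^-)}=0\mid \sqrt{L^w_{\tau(t^-)}}=x)=\mathrm{e}^{-x^2/(n-k)}$. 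The net gain per pair is $\mathrm{e}^{2\sqrt{\log 2}\,\gamma-O(\gamma^2/(k\wedge(n-k)))}$, which at $\gamma=\gamma(k)\asymp\sqrt{k}\log k$ is $\mathrm{e}^{(2+o(1))\sqrt{\log 2}\,\sqrt{k}\log k}$, whereas the entropy factor $2^{-k}\mathrm{e}^{kt^-/n^2}$ decays only like $\mathrm{e}^{-k\log n/n}$. So intermediate-level pairs make $\E[N^2]/(\E[N])^2$ blow up exponentially in $\sqrt n$, and Paley--Zygmund gives nothing. You have conflated the role of $\gamma(k)$ in the paper --- where it only caps increment sizes (events $B$ and $\tilde F_v$) so that the error terms of Lemma~\ref{lem-density-ratio} are summable --- with the width of the barrier. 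The truncation that works is the Bramson-type barrier $a_k$ of \eqref{eq-def-a}, allowing only $\psi(k)=O(\log(k\wedge(n-k)))$ slack below the line; but once you impose it, your first-moment claim collapses as well: entropic repulsion leaves a truncated count whose expectation is not polynomially large (in the paper's calibration it is $\Theta(1/\log^2 n)$, see \eqref{eq-tilde-E-v}), and the second-moment method then delivers only a success probability of order $1/\mathrm{poly}(\log n)$, not a constant $c_0$ (Proposition~\ref{prop-lower-bound}).

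There is a second, independent gap at the leaf end: the comparison cannot be run down to $L^v_{\tau(t^-)}=0$. Lemma~\ref{lem-density-ratio} has error $O((w^2+1)/\ell^2)$ and requires $|w|\le\ell/2$, so it degenerates as the square-root local time approaches $0$, where the field has an atom and is genuinely non-Gaussian; your event demands $L^v_{\tau(t^-)}=0$ while the tube stops at level $n-\log^2 n$, and you provide neither a lower bound for the bottom piece nor pair bounds for ancestors branching in the last $\log^2 n$ levels. This is exactly why Proposition~\ref{prop-lower-bound} targets a leaf with local time at most $\log^8 n$ rather than $0$, and why the paper proves Theorem~\ref{thm-lower-bound} by a four-scale bootstrap ($n_1=30\log\log n$, $n_2$, $n_3=\log^4 n/\sqrt{\log 2}$, $n_4=10(\log\log n)^8$), amplifying the polylogarithmically small per-subtree probability over conditionally independent subtrees and only asking for a truly unvisited vertex at the final $(\log\log n)^8$-sized scale; the $100(\log\log n)^8$ term in $t^-$ is precisely the budget for this. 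Finally, your amplification at level $k_0$ controls the wrong tail: to launch the per-subtree estimate you need the level-$k_0$ local times to be not too \emph{large} (a large initial local time makes unvisited descendants rarer), i.e.\ the analogue of the paper's Step 1 via \eqref{eq-upper-tail-bis}, not the lower-bound event $A$ of \eqref{def-A}; and the constant $c_0$ you invoke there is exactly what the unrepaired second-moment step fails to supply.
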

The proof consists of an analysis for exceptionally large values in
the Gaussian free field and a comparison argument based on
Lemma~\ref{lem-density-ratio}.

\subsection{Exceptional points for Gaussian free field}
We first study the Gaussian free field $\{\eta_v\}_{v\in V}$ on the
tree $T$ of height $n$, with $\eta_\rho = 0$. For $1\leq k<n$, let
$\psi(k) = \frac{\log (k \wedge (n-k))}{2\sqrt{\log 2}}$.  Denote by
\begin{equation}\label{eq-def-a} a_k = (k/n)\big(\sqrt{\log 2} n -
\tfrac{\log n}{2\sqrt{\log 2}}\big) - \psi(k) + 2\,, \mbox{ for }1
\leq k < n\,, \mbox{ and } a_n = \sqrt{\log 2} n - \tfrac{\log
n}{2\sqrt{\log 2}}\,.\end{equation} Consider $\Delta = a_n + \log^4
n$. Recall the definition of $\gamma(k)$ in \eqref{eq-def-gamma}.
For $v\in V_{n}$, define
\begin{align}
E_v &= \{\eta_{v_k}/\sqrt{2} \leq a_k , \mbox{ for all } 1\leq k <
n,  a_n \leq \eta_{v}/\sqrt{2} \leq a_{n} + 1
\}\,,\label{eq-def-E-v}\\
F_v &= \{E_v,  \exists k\leq n: |\eta_{v_k} - \eta_{v_{k-1}}| \geq
|\Delta-\eta_{v_{k-1}}/\sqrt{2}|/\gamma(k)\}\,.\label{eq-def-F-v}
\end{align}

We start with a lower bound on the probability for event $E_v$.

\begin{lemma}\label{lem-E-v}
There exists a constant $c>0$ such that for all $v\in V_n$, we have
$$\P(E_v) \geq \frac{c}{\sqrt{n}} 2^{-n}\,.$$
\end{lemma}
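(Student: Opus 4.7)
The plan is to condition on the value of $\eta_v/\sqrt{2}$ in the target window $[a_n, a_n+1]$ and then separately estimate the Gaussian density at the endpoint and a Ballot-type lower bound for the resulting Gaussian bridge to stay below a logarithmically dipping barrier.

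Write $S_k = \eta_{v_k}/\sqrt{2}$, so that $(S_k)_{0\le k\le n}$ is a Gaussian random walk with $S_0=0$ and i.i.d.\ $N(0,1/2)$ increments. Decompose
\[
\P(E_v) = \int_{a_n}^{a_n+1} \frac{e^{-x^2/n}}{\sqrt{\pi n}}\, \P\bigl(S_k\le a_k,\; 1\le k<n \bigm| S_n=x\bigr)\, dx .
\]
The definition of $a_n$ yields $a_n^2/n = n\log 2 - \log n + O((\log n)^2/n)$, so that the density factor is at least $c\sqrt n\cdot 2^{-n}$ uniformly for $x\in[a_n,a_n+1]$.

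Conditionally on $S_n=x$, set $\tilde S_k = S_k - (k/n)x$, the centered Gaussian bridge with $\tilde S_0=\tilde S_n=0$ and $\mathrm{Cov}(\tilde S_j,\tilde S_k)=\frac{j(n-k)}{2n}$ for $j\le k$. Since $a_k-(k/n)a_n = -\psi(k)+2$ and $|a_n-x|\le 1$, the constraint $S_k\le a_k$ becomes $\tilde S_k \le g(k)$ with $g(k)\ge -\psi(k)+1$ for all $1\le k<n$. Hence the lemma reduces to the Ballot-type bound
\[
\P\bigl(\tilde S_k\le -\psi(k)+1,\; \forall\, 1\le k\le n-1\bigr) \ge c/n.
\]

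This last inequality is a Ballot-type lower bound for a Gaussian random walk bridge under a logarithmically dipping barrier, of the type established by Bramson~\cite{Bramson78} in the Brownian context and by Addario-Berry and Reed~\cite{BR09} in the discrete setting. One route is to split the bridge at $m=\lfloor n/2\rfloor$, conditioning on $\tilde S_m$ lying in a window of size $O(1)$ just below $-\psi(m)$ (where the Gaussian density of $\tilde S_m$ is $\Theta(1/\sqrt n)$), and then applying a Ballot / cycle-lemma estimate to each of the two half-bridges. The reason the order $1/n$ is correct is that the function $-\psi$ is precisely the ``Bramson parabola'' shape of the optimal trajectory of a bridge conditioned to stay non-positive, so the event $\{\tilde S_k\le -\psi(k)+1\}$ has the same probability order as the classical ballot event of a bridge remaining below $0$, namely $\Theta(1/n)$. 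The main technical obstacle is the careful treatment of the interaction between the endpoint of each half-bridge and the dip of the barrier near $k=m$; the $O(1)$ slack in the barrier (i.e., $-\psi(k)+1$ rather than $-\psi(k)$) is precisely what absorbs this interaction. Combining the three factors yields $\P(E_v)\ge c\sqrt n\cdot 2^{-n}\cdot (c/n) = c/(\sqrt n\cdot 2^n)$, as claimed.
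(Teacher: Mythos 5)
Your proposal is correct and follows essentially the same route as the paper: condition on $\eta_v/\sqrt{2}$ landing in $[a_n,a_n+1]$ (yielding the $\Theta(\sqrt{n})\,2^{-n}$ density factor), rewrite the conditional event as a Gaussian bridge staying below the logarithmically dipping curve $-\psi(k)+O(1)$, and invoke a Bramson-type estimate that this has probability at least $c/n$ --- the paper cites \cite[Proposition 2']{Bramson78} for precisely this step. The one caveat is that your midpoint-splitting sketch of that barrier estimate is informal (each half-bridge still faces a logarithmic dip relative to its chord, so a plain ballot/cycle-lemma bound does not by itself conclude), but since you ultimately rely on the cited Bramson/Addario-Berry--Reed results, exactly as the paper does, this does not affect correctness.
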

\begin{proof}
It is clear that
$$\P(E_v) \geq \P(a_n\leq \eta_v/\sqrt{2} \leq a_n+1)
\min_{a_n\leq x\leq a_n+1}\P(E_v \mid \eta_v = \sqrt{2}x) \geq
\frac{\sqrt{n}}{5\cdot 2^n}\cdot \min_{a_n\leq x\leq a_n+1}\P(E_v
\mid \eta_v = \sqrt{2}x)\,,$$ where the second inequality follows
from a bound on the Gaussian density. Denote by $(W_t)_{0\leq t\leq
n}$ a Brownian bridge. We note that
$$\left(\{\eta_{v_\ell}: 0 \leq \ell \leq n\} \mid \eta_v = \sqrt{2}x \right)\stackrel{law}{=} \left\{W_\ell + \tfrac{\ell}{n}\sqrt{2}x : 0\leq \ell \leq n\right\}\,.$$
This implies that, for $x\geq a_n$,
$$\P(E_v \mid \eta_v = \sqrt{2}
x) \geq \P(W_\ell \leq 1 - \sqrt{2} \psi(\ell)\; \mbox{ for } 0\leq
\ell\leq n)\,.$$
By \cite[Proposition 2']{Bramson78},
we have
that $\P(W_\ell \leq 1 - \sqrt{2} \psi(\ell) \;\mbox{for } 0\leq
\ell \leq n) \geq c/n$ for a constant $c>0$. Altogether, we obtain
that \begin{equation*}\P(E_v) \geq \frac{c}{5\sqrt{n}} 2^{-n}\,.
\qedhere\end{equation*}
\end{proof}

We now show that the event $F_v$ is extremely rare.

\begin{lemma}\label{lem-F-v}
For any $v\in V_n$, we have
$$\P( F_v) = 2^{-n}o(1/n), \mbox{ as } n\to \infty\,.$$
\end{lemma}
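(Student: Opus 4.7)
The plan is a union bound over the level $k$ at which the alleged jump occurs, combined with the Markov property of the Gaussian random walk along the path. Writing $W_\ell := \eta_{v_\ell}/\sqrt{2}$ (a random walk with i.i.d.\ $N(0,1/2)$ increments) and $\theta_k := |\Delta-W_{k-1}|/\gamma(k)$, one has $F_v \subseteq \bigcup_{k=1}^n F_v^{(k)}$, where $F_v^{(k)} := E_v \cap \{|\eta_{v_k}-\eta_{v_{k-1}}|\ge\theta_k\}$; it suffices to show $\P(F_v^{(k)}) \le 2^{-n}o(1/n^2)$ uniformly in $k$.

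The first step is a uniform lower bound on $\theta_k$ on $E_v$. Since $W_{k-1} \le a_{k-1}$ on $E_v$,
\[
|\Delta - W_{k-1}| \;\ge\; \Delta - a_{k-1} \;=\; \tfrac{a_n(n-k+1)}{n} + \log^4 n + \psi(k-1) - 2.
\]
Combined with $\gamma(k) \le \sqrt{m}\log m + 2$ for $m := k\wedge(n-k+1)$, an elementary optimization (the ratio is minimized near $m\asymp\log^4 n/\log\log n$) yields $\theta_k^2 \ge c\log^4 n/\log\log n$ on $E_v$, uniformly in $k$. In particular $\exp(-c'\theta_k^2) = o(n^{-C})$ for every fixed $C > 0$.

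Next, for each $k$, I would condition on $W_{k-1} = y$ (with $y\le a_{k-1}$) and on the jump $z := W_k-W_{k-1}$, restricted to $|z|\ge\theta_k(y)/\sqrt{2}$. By the Markov property of $W$, the two events $\{W_j \le a_j,\, j < k\}$ and $\{W_j \le a_j,\, k \le j < n,\, W_n \in [a_n, a_n + 1]\}$ are conditionally independent given $(W_{k-1}, W_k)$. Each can be controlled by a Bramson-type ballot estimate analogous to the one used in the proof of Lemma~\ref{lem-E-v}: up to Gaussian-density prefactors in the endpoints, they contribute $O(2^{-k+1}/\sqrt{k})$ and $O(2^{-(n-k)+1}/\sqrt{n-k})$ respectively, giving the $2^{-n}/\sqrt{n}$ overall scale analogous to the lower bound for $\P(E_v)$.

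The exponential gain comes from the Gaussian cost of the large jump. The integrand contains the density of $z$ (which is $N(0,1/2)$) together with the Gaussian factor $\exp(-(a_n-y-z)^2/(n-k))$ implicit in the second ballot estimate. Completing the square in $z$ and restricting to $|z|\ge\theta_k(y)/\sqrt{2}$ produces an excess factor $\exp(-c''\theta_k^2)$ over the unrestricted integral, yielding
\[
\P\bigl(F_v^{(k)}\bigr) \;\le\; C\cdot 2^{-n}/\sqrt{n}\cdot \exp(-c''\theta_k^2) \;=\; 2^{-n}o(n^{-C})
\]
for every $C$. Summing over $k$ then gives $\P(F_v) = 2^{-n}o(1/n)$. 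The main obstacle will be carrying out the Bramson-type ballot estimates uniformly in the displaced endpoint $y+z$ rather than at the typical height $a_k$; the positive-$z$ and negative-$z$ sub-cases need separate handling (positive $z$ is capped by the barrier constraint $y+z\le a_k$, while negative $z$ sends $W_k$ far below $a_k$ and forces the walk to traverse the extra distance $|z|$ before time $n$).
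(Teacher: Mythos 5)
Your core mechanism is the same as the paper's: on $E_v$ the required jump size $\theta_k=(\Delta-W_{k-1})/\gamma(k)$ is uniformly of order at least $\log^2 n/\log\log n$ (your stated bound $\theta_k^2\ge c\log^4 n/\log\log n$ should be $\log^4 n/(\log\log n)^2$, with the minimizing scale $m\asymp\log^4 n$, but this is immaterial), so each jump event costs a superpolynomially small Gaussian factor, while the endpoint window $\{a_n\le \eta_v/\sqrt2\le a_n+1\}$ costs $2^{-n}\mathrm{poly}(n)$, and a union bound over $k$ finishes. Where you diverge is in the bookkeeping: the paper simply writes $\P(F_v)\le \P(a_n\le\eta_v/\sqrt2\le a_n+1)\cdot J\le 2^{-n}nJ$ and bounds $J$ by conditioning, for each $k$, on the two values $\eta_{v_{k-1}}=\sqrt2 y$ (with $y\le a_{k-1}$) and $\eta_v=\sqrt2 x$ only; given these, the single increment $\eta_{v_k}-\eta_{v_{k-1}}$ is Gaussian with mean $\tfrac{\sqrt2}{n-k+1}(x-y)=o((\Delta-y)/\gamma(k))$ and variance at most $1$, so the jump probability is at most $\mathrm{e}^{-(\Delta-y)^2/4\gamma(k)^2}\le \mathrm{e}^{-\log^2 n}$, and no barrier information is needed at all. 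Your plan instead retains the barrier events on both sides of the jump and invokes Bramson-type ballot upper bounds; this is unnecessary work, and the specific uniform claims you make (that the two segments contribute $O(2^{-k+1}/\sqrt k)$ and $O(2^{-(n-k)+1}/\sqrt{n-k})$ for every admissible displaced endpoint) are not correct as stated — e.g.\ the density of $W_{k-1}$ at a value $y$ far below $a_{k-1}$ is much larger than $2^{-k}$, and the compensation only appears after combining with the second segment's climb and the jump cost, which is exactly the uniformity issue you flag as the ``main obstacle.'' Since an upper bound may simply discard the barrier constraints, that obstacle evaporates: replacing your ballot step by the paper's two-point conditioning yields $\P(F_v)\le 2^{-n}n\cdot n\,\mathrm{e}^{-\log^2 n}=2^{-n}o(1/n)$ directly, and your draft becomes a (correct) but heavier version of the same argument.
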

\begin{proof}
Take $v\in V_{n}$. It is clear that
\begin{align*}
\P(F_v) \leq  \P(a_{n} \leq \eta_v/\sqrt{2} \leq a_{n} + 1) J \leq
2^{-n} n J\,,
\end{align*}
where
$$J = \max_{\stackrel{a_{n} \leq x \leq
a_{n} + 1}{ y\leq a_{k-1}}}\sum_{k=1}^{n} \P(|\eta_{v_k} -
\eta_{v_{k-1}}| \geq |\Delta-\eta_{v_{k-1}}/\sqrt{2}|/\gamma(k) \mid
\eta_v = \sqrt{2}x,  \eta_{v_{k-1}} = \sqrt{2} y)\,.$$ Conditioning
on $\eta_v =\sqrt{2} x,  \eta_{v_{k-1}} = \sqrt{2}y$, we have
$\eta_{v_k} - \eta_{v_{k-1}}$ distributed as a Gaussian variable
with mean $\frac{\sqrt{2}}{n-k + 1} (x-y)$ and variance
$\frac{n-k}{n-k+1}$. For $x,y$ that is under consideration, we have
$\frac{\sqrt{2}}{n-k + 1} (x-y) = o(\frac{\Delta - y}{\gamma(k)})$.
Therefore, we obtain that
$$\P(|\eta_{v_k} -
\eta_{v_{k-1}}| \geq |\Delta -\eta_{v_{k-1}}/\sqrt{2}|/\gamma(k)
\mid \eta_v = \sqrt{2}x,  \eta_{v_{k-1}} = \sqrt{2} y)\leq
\mathrm{e}^{-\frac{(\Delta - y)^2}{4(\gamma(k))^2}} \leq
\mathrm{e}^{-\log^2 n}\,,$$ for large enough $n$. This implies that
$J \leq n \mathrm{e}^{-\log^2 n}$, and thus $\P(F_v) =
2^{-n}o(1/n)$.
\end{proof}

We next study the correlation for events $E_u$ and $E_v$. For $u,
v\in V$, denote by $u\wedge v$ the least common ancestor of $u$ and
$v$.

\begin{lemma}\label{lem-E-u-v}
Consider $u, v\in V_{n}$ and assume that $u\wedge v\in V_k$. Then,
$$\P(E_u \cap E_v)\leq \P(E_u) \frac{20 \log^2 n}{\sqrt{n-k}\cdot ((n-k) \wedge k) } 2^{-(n-k)}\,.$$
\end{lemma}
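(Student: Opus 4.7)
The plan is to exploit the conditional independence of the Gaussian free field on the two subtrees rooted at the children of the common ancestor $w := u \wedge v \in V_k$.

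\textbf{Conditional factorization.} Given the shared path $(\eta_{w_j})_{0 \le j \le k}$, the branches $(\eta_{u_{k+j}})_{1 \le j \le n-k}$ and $(\eta_{v_{k+j}})_{1 \le j \le n-k}$ are independent Gaussian random walks of length $n-k$ started at $\eta_w$. For $y \le a_k$, set
$$Q(y) := \P\bigl(\eta_{u_{k+j}}/\sqrt 2 \le a_{k+j} \text{ for } 1 \le j < n-k,\ \eta_u/\sqrt 2 \in [a_n, a_n+1] \bigm| \eta_w/\sqrt 2 = y\bigr);$$
by symmetry the analogous quantity for $v$ is also $Q$. Then
$$\P(E_u \cap E_v) = \E\!\left[\1_{\{\eta_{w_j}/\sqrt 2 \le a_j,\, 1 \le j \le k\}}\, Q(\eta_w/\sqrt 2)^2\right],$$
and $\P(E_u)$ has the same form with $Q^2$ replaced by $Q$. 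Since the indicator restricts to $\eta_w/\sqrt 2 \le a_k$, bounding one factor of $Q$ by its supremum gives
$$\P(E_u \cap E_v) \le \Bigl(\sup_{y \le a_k} Q(y)\Bigr)\, \P(E_u).$$
It therefore suffices to show $\sup_{y \le a_k} Q(y) \le \frac{20 \log^2 n}{\sqrt{n-k}\, ((n-k) \wedge k)}\, 2^{-(n-k)}$.

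\textbf{Bounding the descent.} Condition further on $\eta_u/\sqrt 2 = z \in [a_n, a_n + 1]$ to write
$$Q(y) = \int_{a_n}^{a_n+1} p(z \mid y)\, R(y, z)\, dz,$$
with $p(z \mid y) = \frac{1}{\sqrt{\pi(n-k)}} e^{-(z-y)^2/(n-k)}$ and $R(y,z)$ the conditional probability that the bridge path stays below $a_{k+j}$ on the interior. Using the identity $2\sqrt{\log 2}\,\psi(k) = \log(k \wedge (n-k))$ together with the definitions of $a_k, a_n$, a direct expansion of $(a_n - a_k)^2/(n-k)$ yields, for $y \le a_k$ and $z \in [a_n, a_n+1]$,
$$p(z \mid y) \le \frac{C\, n^{(n-k)/n}}{\sqrt{n-k}\, (k \wedge (n-k))}\, 2^{-(n-k)}\, e^{-2\sqrt{\log 2}\,(a_k - y)}.$$
For $R(y, z)$, subtracting the linear interpolation between $(0, y)$ and $(n-k, z)$ produces a Brownian bridge from $0$ to $0$ of length $n-k$ required to stay below an effective curve of the form $-\psi(k+s) + (a_k - y)(1 - \tfrac{s}{n-k}) + O(1)$. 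An upper-bound Bramson-type estimate (the natural counterpart of Proposition 2' of \cite{Bramson78}, whose lower-bound version drives Lemma \ref{lem-E-v}) gives $R(y, z) \le C(1 + a_k - y)/(n-k)$. Since the Gaussian tail $e^{-2\sqrt{\log 2}(a_k - y)}$ dominates the linear slack $(1 + a_k - y)$, the supremum of $p \cdot R$ over $y \le a_k$ is attained at $a_k - y = O(1)$, yielding
$$\sup_{y \le a_k} Q(y) \le \frac{C\, n^{(n-k)/n}}{\sqrt{n-k}\, (k \wedge (n-k))(n-k)}\, 2^{-(n-k)},$$
which is $\le \frac{20 \log^2 n}{\sqrt{n-k}\,((n-k) \wedge k)}\, 2^{-(n-k)}$ because $n^{(n-k)/n}/(n-k) \le C \log^2 n/\sqrt{n-k}$ when $n - k \ge n/(20 \log^2 n)$, while for smaller $n-k$ the target already exceeds $1 \ge Q(y)$.

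\textbf{Main obstacle.} The essential technical input is the Bramson-type upper bound for $R(y, z)$: the probability that a Brownian bridge of length $n - k$ stays below the $\psi(k+s)$-shaped barrier must be controlled uniformly in the endpoints and in the ancestor level $k$, tracking both the $\psi$-dip of size $\log n$ in the middle and the linear endpoint slacks. This is the upper-bound counterpart of the lower-bound Proposition 2' cited in Lemma \ref{lem-E-v}. Once this bridge estimate is in place, the rest is straightforward book-keeping on Gaussian tails and an optimization over $y$.
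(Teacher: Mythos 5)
Your overall route is the same as the paper's: factor at the common ancestor $w=u\wedge v$, bound $\P(E_v\mid E_u)$ by $\sup_{y\le a_k}Q(y)$, write $Q$ as the Gaussian density of the endpoint $\eta_v/\sqrt2\in[a_n,a_n+1]$ times a bridge "stay below the barrier" probability, and extract $2^{-(n-k)}\cdot\frac{1}{(n-k)\wedge k}\cdot e^{-2\sqrt{\log 2}(a_k-y)}$ from the density. Two remarks. First, the step you single out as the "main obstacle" --- a Bramson-type upper bound giving $R(y,z)\le C(1+a_k-y)/(n-k)$ --- is not needed and, as stated, is left unproven. The paper simply bounds the stay-below event by $\P(\max_s W_s\le 2(\log n+(a_k-y)))$ and uses the exact Rayleigh/reflection formula \eqref{eq-reighlay}, which costs only a $(\log n+(a_k-y))^2$ factor that the $20\log^2 n$ in the statement absorbs after multiplying by $e^{-2\sqrt{\log 2}(a_k-y)}$. (Your sharper bound is in fact true, but it follows from the elementary linear-barrier reflection bound, not from a curved-barrier analysis: writing the ceiling relative to the linear interpolation of the endpoints, the curved part is $-\psi(\ell)+\frac{n-\ell}{n-k}\psi(k)+O(1)\le O(1)$ for all $k\le\ell\le n$, so the ceiling is dominated by a line from $O(1)+(a_k-y)$ down to $O(1)$ and the $\psi$-dip only makes the event less likely.)

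Second, your final bookkeeping has a genuine slip. The inequality you invoke, $n^{(n-k)/n}/(n-k)\le C\log^2 n/\sqrt{n-k}$, is false when $k$ is small (at $k=O(1)$ it asserts $n\lesssim\sqrt n\log^2 n$), and the fallback case "for $n-k< n/(20\log^2 n)$ the target exceeds $1$" is also false except when $n-k=O(\log\log n)$, because the target carries the factor $2^{-(n-k)}$; so as written there is a range of $k$ covered by neither case. The fix is immediate: what you actually need is $C\,n^{(n-k)/n}\le 20\,(n-k)\log^2 n$, and indeed $n^{m/n}\le e\,m$ for all $1\le m\le n$ (the function $m\mapsto \frac{m}{n}\log n-\log m$ is maximized at the endpoints, where it is at most $\frac{\log n}{n}$), so no case split is required. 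With that correction (and modulo tracking the explicit constant $20$, which your generic constants do not deliver but which is immaterial for the way the lemma is used), your argument is sound and essentially coincides with the paper's proof.
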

\begin{proof}
Denote by $w = u \wedge v$, and let $f(\cdot)$ be the density
function of $\eta_w/\sqrt{2}$. For $i<j$, write $E_v^{i, j} =
\{\eta_{v_\ell}/\sqrt{2} \leq a_\ell , \mbox{ for all } i\leq \ell <
j\}$. Then,
\begin{align}
\P(E_u \cap E_v) &= \P(E_u) \P(E_v \mid E_u) \leq \P(E_u) \max_{x
\leq a_k} \P(E_v^{k, n}, a_n \leq \eta_v \leq a_n + 1  \mid \eta_w/\sqrt{2} = x)\nonumber\\
&\leq \P(E_u) \max_{x\leq a_k}\int_{a_n}^{a_n+1}
\frac{1}{\sqrt{n-k}} \mathrm{e}^{-\frac{(y-x)^2}{n-k}} \P(E_v^{k, n}
\mid \eta_w/\sqrt{2} = x, \eta_v/\sqrt{2} = y) dy \,.
\label{eq-E-u-v-temp}
\end{align}
%
For $x \leq a_k$ and $a_n \leq y a_n +1$, we first analyze the
probability $\P(E_v^{k, n} \mid \eta_w/\sqrt{2} = x, \eta_v/\sqrt{2}
= y)$. Let $(W_s)_{0\leq s \leq n-k}$ be a Brownian bridge. It is
clear that
$$\left(\left\{\eta_{v_\ell} /\sqrt{2}: k\leq \ell \leq n\right\} \mid \eta_w/\sqrt{2} = x, \eta_v/\sqrt{2} = y\right) \stackrel{law}{=} \left\{W_{\ell - k}/\sqrt{2} + \tfrac{\ell - k}{n-k} y + \tfrac{n- \ell}{n-k}x: k\leq \ell \leq n\right\}\,.$$
Combined with \eqref{eq-reighlay}, it follows that
\begin{equation}\label{eq-E-v-k-n}
\P(E_v^{k, n} \mid \eta_w/\sqrt{2} = x, \eta_v/\sqrt{2} = y) \leq
\P(\max_s W_s \leq 2(\log n + (a_k-x))) \leq \frac{4(\log n +
(a_k-x))^2}{n-k}\,.\end{equation} By a
straightforward calculation, we
have that \begin{align*}\mathrm{e}^{-\frac{(y-x)^2}{n-k}} \leq
\mathrm{e}^{-\frac{(a_n - a_k)^2}{n-k}}
\mathrm{e}^{-\frac{2(a_n-a_k)(a_k - x)}{n-k}} &\leq 2^{-(n-k)}
\mathrm{e}^{\frac{n-k}{n}\log n - \log ((n-k) \wedge k)}
\mathrm{e}^{2\sqrt{\log 2}(x-a_k)} \\
&\leq 2^{-(n-k)}\frac{n-k}{(n-k)\wedge k} \mathrm{e}^{2\sqrt{\log
2}(x-a_k)}\,.\end{align*} Combined with \eqref{eq-E-v-k-n}, it
follows that
\begin{align*}\int_{a_n}^{a_n + 1} \frac{1}{\sqrt{n-k}}
    \mathrm{e}^{-\frac{(y-x)^2}{n-k}} \P(E_v^{k, n} \mid \eta_w = \sqrt{2}
    x,
    \eta_v = \sqrt{2} y) dy &\leq 2^{-(n-k)}\frac{4(\log n +
(a_k-x))^2}{\sqrt{n-k} \cdot ((n-k) \wedge k)} \mathrm{e}^{2\sqrt{\log 2}(x -a_k)}\\
& \leq 2^{-(n-k)}\frac{20 \log^2 n}{\sqrt{n-k} \cdot ((n-k) \wedge
k)}\,.\end{align*}
 Together
with \eqref{eq-E-u-v-temp}, we deduce that
\begin{equation*}\P(E_u \cap E_v)\leq \P(E_u) \frac{20 \log^2 n}{\sqrt{n-k} \cdot ((n-k) \wedge k)}
2^{-(n-k)}\,.\qedhere\end{equation*}
\end{proof}

\subsection{Lower bound for cover times}

We now turn to study the cover time. The key estimate lies in the
following proposition.

\begin{prop}\label{prop-lower-bound}
With notation as in Theorem \ref{thm-cover-inverse}, let
$s = (\sqrt{\log 2} n
- \frac{\log n}{2\sqrt{\log 2}} + \log^4 n)^2$. Then there exists a
constant $c>0$ such that
$$\P(\mbox{$\min_{v\in V_n}$} L^v_{\tau(s)} \leq \log^8 n) \geq \frac{c}{\log^5 n}\,.$$
\end{prop}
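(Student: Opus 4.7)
The plan is a second-moment (Paley--Zygmund) argument applied to a carefully chosen counting random variable. For each leaf $v\in V_n$, I introduce the local-time analog of the GFF events $E_v,F_v$ from \eqref{eq-def-E-v}--\eqref{eq-def-F-v}: let $\widetilde E_v$ be the event that $\sqrt{s}-\sqrt{L^{v_k}_{\tau(s)}}\leq a_k$ for every $1\leq k<n$ and $\sqrt{s}-\sqrt{L^{v}_{\tau(s)}}\in[a_n,a_n+1]$, and let $\widetilde F_v$ be the one-step ``bad increment'' analog of $F_v$ in terms of increments of $\sqrt{L^{v_k}_{\tau(s)}}$. Since $\sqrt{s}=a_n+\log^4 n$, on $\widetilde E_v$ we have $L^v_{\tau(s)}\in[(\log^4 n -1)^2,\log^8 n]$. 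Setting
\[
N=\sum_{v\in V_n} \one_{\widetilde E_v\setminus \widetilde F_v},
\]
we have $\{N\geq 1\}\subseteq\{\min_v L^v_{\tau(s)}\leq \log^8 n\}$, so it suffices to lower bound $\E N$ and upper bound $\E N^2$.

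For the first moment, I use Ray--Knight to view $(\sqrt{L^{v_k}_{\tau(s)}})_{k\geq 0}$ as a Markov chain with $\mathrm{PoiGamma}(\ell^2,1)$ transitions. Applying Lemma~\ref{lem-density-ratio} at each of the levels $k=1,\ldots,n'\df n-\log^2 n$ to compare the one-step density of $\sqrt{L^{v_k}_{\tau(s)}}$ with that of the corresponding Gaussian increment of the GFF, and then invoking Claim~\ref{claim-technical} to telescope, the ratio between the joint density of $(\sqrt{s}-\sqrt{L^{v_k}_{\tau(s)}})_{k\leq n'}$ and the joint density of $(\eta_{v_k}/\sqrt{2})_{k\leq n'}$ (restricted to the relevant trajectories) is $\Theta(1)\sqrt{\sqrt{s}/(\sqrt{s}-\tilde\ell_{n'})}=\Theta(\sqrt{n}/\log^2 n)$ on the event. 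Combined with the GFF lower bound of Lemma~\ref{lem-E-v} (truncated to level $n'$), and with a direct $\mathrm{PoiGamma}$-based estimate for the conditional probability that the remaining $\log^2 n$ levels land in the window $[\log^4 n -1,\log^4 n]$, one obtains $\P(\widetilde E_v)\geq c\cdot 2^{-n}/\log^{a} n$ for a moderate $a$. The contribution of $\widetilde F_v$ is negligible by a calculation analogous to Lemma~\ref{lem-F-v} using Corollary~\ref{cor-large-deviation}. Hence $\E N\geq c/\log^{a} n$.

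For the second moment, decompose
\[
\E N^2=\sum_{k=0}^{n-1}\sum_{u,v\in V_n:\,u\wedge v\in V_k}\P\!\left(\widetilde E_u\cap\widetilde E_v\setminus(\widetilde F_u\cup\widetilde F_v)\right).
\]
Conditioning on the shared square-root local time profile up to level $k$, the two post-branching processes evolve independently by the Markov (second Ray--Knight) property. Repeating the density-comparison argument of the first moment separately on each branch reduces the conditional probability to the corresponding GFF quantity, which is precisely what Lemma~\ref{lem-E-u-v} estimates, with an extra polylog factor from the density ratios on each branch. Summing $2^k\cdot 2^{n-k}$ copies of the resulting bound over $k\in\{0,\ldots,n-1\}$ yields $\E N^2\leq (\E N)^2\cdot\log^{b} n$ with $a+b\leq 5$ after bookkeeping. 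Paley--Zygmund then gives $\P(N\geq 1)\geq (\E N)^2/\E N^2\geq c/\log^5 n$, completing the proof.

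The main obstacle is the terminal regime $n'<k\leq n$: here Lemma~\ref{lem-density-ratio} deteriorates because $\sqrt{L^{v_k}_{\tau(s)}}$ is no longer much larger than the per-step fluctuation, and the Gaussian comparison breaks down. One must work directly with the $\mathrm{PoiGamma}$ law of the final $\log^2 n$ transitions---essentially a squared-Bessel-type small-value calculation---and show that the profile exits into the tight window $[\log^4 n -1,\log^4 n]$ at the leaf with probability at least an inverse polylog. A second delicate point is balancing the polylog exponents in $\E N$ and $\E N^2$ so that the Paley--Zygmund ratio lands at $c/\log^5 n$, which constrains how tightly one must estimate both moments.
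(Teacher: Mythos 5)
Your strategy is the paper's: the same events $\tilde{E}_v,\tilde{F}_v$, the same count $N$, the density comparison of Lemma~\ref{lem-density-ratio} plus Claim~\ref{claim-technical} to transfer the GFF estimates of Lemmas~\ref{lem-E-v}, \ref{lem-F-v} and \ref{lem-E-u-v} to the local times, and a second-moment argument. The genuine gap is the step you yourself flag as the ``main obstacle'': you truncate the comparison at $n'=n-\log^2 n$, claim the Gaussian comparison breaks down on the last $\log^2 n$ levels, and defer the first moment there to an unperformed ``squared-Bessel small-value'' computation. That obstacle does not exist, and the deferred estimate is never supplied. On $\tilde{E}_v\setminus\tilde{F}_v$ one has $\sqrt{L^{v_k}_{\tau(s)}}\geq \sqrt{s}-a_k\geq \log^4 n-2$ at \emph{every} level $k\leq n$ --- this is exactly why $s$ is chosen as $(a_n+\log^4 n)^2$ and the leaf value is targeted in the window $\sqrt{s}-Z_v\in[a_n,a_n+1]$ rather than near $0$; moreover the increments are at most $\ell_{k-1}/\gamma(k)\leq\ell_{k-1}/2$ since $\gamma(k)\geq 2$, and $\sum_{k\leq n}(z_k^2+1)/\ell_{k-1}^2\leq\sum_k\big((\gamma(k))^{-2}+4/((n-k)^2+\log^4 n)\big)=O(1)$. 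Hence Lemma~\ref{lem-density-ratio} is applicable at all $n$ levels, Claim~\ref{claim-technical} gives the ratio $\Theta\big(\sqrt{\sqrt{s}/\log^4 n}\big)=\Theta(\sqrt{n}/\log^2 n)$ all the way to the leaf, and no separate terminal-regime analysis (and no truncation at $n'$) is needed; truncating would also complicate matching your event to Lemma~\ref{lem-E-v}, which concerns the full path to level $n$.

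A secondary but real issue is that the exponent accounting is asserted, not derived, and as stated it is not the right combination: with $\E N\geq c/\log^a n$ and an absolute bound $\E N^2\leq C\log^{b}n$, Paley--Zygmund gives $c'/\log^{2a+b}n$, so you need $2a+b\leq 5$, not ``$a+b\leq 5$''. The computation that delivers exactly $5$ is: $\P(\tilde{E}_v\setminus\tilde{F}_v)\geq c\,2^{-n}/\log^2 n$ (ratio $\sqrt{n}/\log^2 n$ times $\P(E_v\setminus F_v)\geq\tfrac12\P(E_v)\geq c\,2^{-n}/\sqrt{n}$), so $\E N\geq c/\log^2 n$; and, via the two-branch density comparison and Lemma~\ref{lem-E-u-v}, $\E N^2\leq C\,2^n\sqrt{n}\,\P(E_v)\sum_j \frac{2^{n-j}\,2^{-(n-j)}}{(n-j)\wedge j}\leq C'\,2^n\sqrt{n}\,\P(E_v)\log n=O(\log n)$, giving $(\E N)^2/\E N^2\geq c/\log^5 n$. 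Since $2\cdot 2+1=5$ with no slack, you must actually carry out this bookkeeping rather than leave it at ``moderate $a$'' and ``after bookkeeping''.
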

\begin{proof}
Let $Z_v = \sqrt{L^v_{\tau(s)}}$ for $v\in V$. Let $a_k$ be defined
as in \eqref{eq-def-a}. For $v\in V_n$, define
\begin{align}
\tilde{E}_v &= \{ \sqrt{s} - Z_{v_k} \leq a_k , \mbox{ for all }
1\leq k < n, a_n \leq \sqrt{s} - Z_v \leq a_{n} + 1
\}\,,\\
\tilde{F}_v &= \{\tilde{E}_v,  \exists k\leq n: |Z_{v_k} -
Z_{v_{k-1}}| \geq Z_{v_{k-1}}/\gamma(k)\}\,.
\end{align}
Define $\Omega_v \subseteq \R^n$ such that for any $(z_1, \ldots,
z_n) \in \Omega_v$
$$\{Z_{v_{k}} - Z_{v_{k-1}} = z_k \mbox{ for all } 1\leq k\leq n\} \subseteq \tilde{E}_v \setminus \tilde{F}_v\,.$$
It is clear from \eqref{eq-def-E-v} and \eqref{eq-def-F-v} that
$$\{\eta_{v_{k-1}}/\sqrt{2} - \eta_{v_{k}}/\sqrt{2} = z_k \mbox{ for all } 1\leq k\leq n\} \subseteq E_v \setminus F_v\,.$$
Let $\alpha_v(\cdot), \beta_v(\cdot)$ be density functions over
$\Omega$ for $(Z_{v_k} - Z_{v_{k-1}})_{1\leq k\leq n}$ and
$(\eta_{v_{k-1}}/\sqrt{2} - \eta_{v_{k}}/\sqrt{2})_{1\leq k\leq n}$,
respectively. Consider $(z_1, \ldots, z_n) \in \Omega_v$. By
Lemma~\ref{lem-density-ratio}, we have
\begin{equation}\label{eq-alpha-beta}\alpha_v(z_1, \ldots, z_n) =
\beta_v(z_1, \ldots, z_n) \mbox{$\prod_{k=1}^n $} \big(1 -
\tfrac{z_k}{2\ell_{k-1}} + O(\tfrac{z_k^2 +
1}{\ell_{k-1}})\big)\,,\end{equation} where $\ell_k = \sqrt{s} -
\sum_{i=1}^{k} z_k$. Since $(z_1, \ldots, z_n) \in \Omega_v$, we
have that
$$\sum_{k=1}^n \tfrac{z_k^2+1}{\ell_{k-1}^2} \leq \sum_{k=1}^n \big(\tfrac{1}{(\gamma(k))^2} + \tfrac{4}{(n - k)^2+ \log^4 n}\big) = O(1)\,.$$
Applying Claim~\ref{claim-technical}, we obtain that
$$\alpha_v(z_1, \ldots, z_n) = \Theta(1)\frac{\sqrt{n}}{\log^2 n}\beta_v(z_1, \ldots, z_n)\,.$$
Integrating over both sides and recalling Lemmas~\ref{lem-F-v}
and \ref{lem-E-v}, we obtain that \begin{equation}\label{eq-tilde-E-v}
\P(\tilde{E}_v \setminus \tilde{F_v}) =
\Theta(1)\frac{\sqrt{n}}{\log^2 n} \P(E_v \setminus F_v) =
\Theta(1)\frac{c_1 \sqrt{n}}{2 \log^2 n} \P(E_v) \geq \Theta(1)
\cdot \frac{1}{2 \log^2 n}2^{-n}\,.\end{equation}

\medskip

We next analyze the correlation of $\tilde{E}_v\setminus
\tilde{F}_v$ and $\tilde{E}_u\setminus \tilde{F}_u$. Consider $u,
v\in V_n$ and assume that $u \wedge v\in V_k$. We write
\begin{align*}\underline{Z}& = (Z_{v_1} - Z_{v_0}, \ldots, Z_{v_n} - Z_{v_{n-1}},
Z_{u_{k+1}} - Z_{u_{k}}, \ldots, Z_{u_{n}} - Z_{u_{n-1}})\,,\\
\underline{\eta}& = \frac{1}{\sqrt{2}}(\eta_{v_0} - \eta_{v_1},
\ldots, \eta_{v_{n-1}} - \eta_{v_{n}}, \eta_{u_{k}} -
\eta_{u_{k+1}}, \ldots, \eta_{u_{n-1}} - \eta_{u_{n}})\,.
\end{align*}
Define $\Omega_{u, v} \subseteq \R^{2n-k}$ such that for all
$\underline{z} = (z_{v,1}, \ldots, z_{v,n}, z_{u,k+1}, \ldots, z_{u,
n}) \in \Omega_{u, v}$,
$$\{\underline{Z} = \underline{z}\} \subseteq (\tilde{E}_v \setminus \tilde{F}_v) \cap (\tilde{E}_u \setminus \tilde{F}_u)\,.$$
It is then clear that $\{\underline{\eta} = \underline{z}\}
\subseteq (E_v\setminus F_v) \cap (E_u \setminus F_u)$. Let
$\alpha_{u, v}(\cdot)$ and $\beta_{u, v}(\cdot)$ be density
functions for $\underline{Z}$ and $\underline{\eta}$, respectively.
Let $z_{u, i} = z_{v, i}$ for all $1\leq i\leq k$. For $w\in \{u,
v\}$, write $\ell_{w, j} = \sqrt{t} - \sum_{i=1}^j z_{w, j}$. By
Lemma~\ref{lem-density-ratio}, we get that that
$$\alpha_{u, v}(\underline{z}) = \beta_{u, v}(\underline{z})\cdot
\prod_{j=1}^n \big(1 - \tfrac{z_{v, j}}{2\ell_{v, j-1}} +
O\big(\tfrac {z_{v, j}^2+1}{\ell_{v, j-1}^2}\big)\big) \cdot
\prod_{j=k}^n\big( 1 - \tfrac{z_{u,j}}{2\ell_{u,j-1}} +
O\big(\tfrac{z_{u, j}^2 + 1}{\ell_{u, j-1}^2}\big)\big)\,.$$
Applying Claim~\ref{claim-technical} again, we obtain that
$$\alpha(u, v)(\underline{z}) = O(1)\frac{ \sqrt{n(n-k)}}{\log^2 n}
\beta_{u, v}(\underline{z})\,.$$ Integrating over both sides and
applying Lemma~\ref{lem-E-u-v}, we get that
$$\P(\tilde{E}_v \setminus \tilde{F}_v) \cap (\tilde{E}_u
\setminus \tilde{F}_u)  = O(1) \frac{\sqrt{n}}{(n-k) \wedge k}
\P(E_v) 2^{-(n-k)}\,.$$ This implies that for a constant $C>0$
$$\E\big(\mbox{$\sum_{w\in V_n}$} \one_{\tilde{E}_w\setminus \tilde{F}_w}\big)^2 \leq C \sqrt{n} 2^n \P(E_v)  \sum_{j=1}^n \sum_{w: w\wedge v \in V_j} \frac{2^{-(n-j)}}{(n-j) \wedge j} \leq C \sqrt{n} 2^n \P(E_v) \cdot 4\log n \,.$$
At this point, an application of the second moment method together with
\eqref{eq-tilde-E-v} gives that
$$\P(\exists w\in V_n: \tilde{E}_w\setminus \tilde{F}_w) \geq \frac{\big(\E\mbox{$\sum_{w\in V_n}$} \one_{\tilde{E}_w\setminus \tilde{F}_w}\big)^2}{\E\big(\mbox{$\sum_{w\in V_n}$} \one_{\tilde{E}_w\setminus \tilde{F}_w}\big)^2} \geq \frac{(2^n \P(\tilde{E}_v \setminus \tilde{F}_v))^2}{C \sqrt{n} 2^n \P(E_v)} \geq \frac{1}{C' \log^5 n}\,,$$
for a constant $C'>0$. Recalling the definition of $\tilde{E}_v$, we
complete the proof of the proposition.
\end{proof}

Next, we bootstrap the above estimate and prove the main result in
this section.

\begin{proof}[\emph{\textbf{Proof of Theorem~\ref{thm-lower-bound}.}}]
    Throughout the proof, we write $t=t^-$.
Let $n_1 = 30\log \log n$, $n_3 = \frac{\log^4 n}{\sqrt{\log 2}}$,
$n_4 = 10 (\log \log n)^8$, and $n_2 = n- n_1 - n_3 - n_4$. For
$k\in \N$, write $b_k = \sqrt{\log 2} k - \frac{\log k}{2\sqrt{\log
2}}$. Note that $\sqrt{t} + 50 n_1 \leq b_{n_2} + b_{n_3}$. Our
proof is divided into 4 steps.

\noindent{\bf Step 1}. Write $t_1 = (\sqrt{t} + 2 n_1)^2$. Since for
all $v\in V_{n_1}$ we have $L^{v}_{\tau(t)} \sim
\mathrm{PoiGamma}(t/n_1, n_1)$, an application of
\eqref{eq-upper-tail-bis} yields that
\begin{equation}\label{eq-lower-1}
\P(\exists v\in V_{n_1}: L^{v}_{\tau(t)} \geq t_1) \leq 2^{n_1}\P(
\mathrm{PoiGamma}(t/n_1, n_1) \geq t_1) \leq 2^{n_1} \mathrm{e}^{-4
n_1} = o(1)\,.
\end{equation}

\noindent{\bf Step 2}. For $v\in V_{n_1}$, let $T_v$ be the subtree
rooted at $v$ of height $n_2$. Write $t_2 = (\sqrt{t_1} -
b_{n_2})^2$. By \eqref{eq-lower-1}, we assume in what follows that
$L^v_{\tau(t)} \leq t_1$. Applying
Proposition~\ref{prop-lower-bound} to the subtree $T_v$, we deduce
that for a constant $c>0$,
\begin{equation*}
\P(\min_{u\in T_v \cap V_{n_1+n_2}} L^u_{\tau(t)} \leq t_2) \geq
\frac{c}{\log^5 n}\,.
\end{equation*}
Let $S_1 = \{v\in V_{n_1} : \min_{u\in T_v \cap V_{n_2}}
L^u_{\tau(t)} \leq t_2\}$. By independence of the random walk on
different subtrees, we obtain that with high probability $|S_1| \geq
2^{n_1} / \log^6 n \geq 2^{2 \log \log n}$. We assume this in what
follows. Define $S_2 = \{u\in V_{n_1 + n_2}: L^u_{\tau(t)} \leq
t_2\}$. We see that $|S_2| \geq |S_1| \geq 2^{2 \log \log n}$.

\noindent{\bf Step 3}. For $u\in S_2$, consider the subtree $T_u$
rooted at $u$ of height $n_3$.  Since $(\sqrt{t_2} - b_{n_3})^2 \leq
(\log\log n)^8$. We can apply Proposition~\ref{prop-lower-bound}
again to the subtree $T_u$ and obtain that for a constant $c>0$
\begin{equation*}
\P(\min_{w\in T_u \cap V_{n_1 + n_2 + n_3}} L^w_{\tau(t)} \leq
(\log\log n)^8) \geq \frac{c}{(\log \log n)^5}\,.
\end{equation*}
Let $S_3 = \{w\in T_u \cap V_{n_1 + n_2 + n_3}: L^w_{\tau(t)} \leq
(\log\log n)^8\}$. We can then obtain that with high probability
$|S_3| \geq 2^{\log \log n}$, and we assume this in what follows.

\noindent{\bf Step 4}. For $w\in S_3$, let $T_w$ be the subtree
rooted at $w$ that contains all its descendants. We trivially have
that
$$\P(\min_{w'\in T_w} L^{w'}_{\tau(t)} = 0) \geq \frac{1}{2}\,.$$
Since $|S_3| \geq 2^{\log \log n}$, we see that with high
probability there exists a vertex $w'\in \cup_{w\in S_3}T_w
\subseteq V$ with  $L^{w'}_{\tau(t)} = 0$, completing the proof.
\end{proof}

\subsection{Concentration of inverse local time}

We have been measuring the cover time via the inverse local time so
far. In this subsection, we prove that the inverse local is
well-concentrated around the mean and thus it indeed yields a good
estimate on the cover time.

\begin{lemma}\label{lem-concentration-inverse-time}
Consider a random walk on a rooted binary tree $T = (V, E)$ of
height $n$. Then,
$$\var(\tau(t)) = O(1) \cdot 2^{2n} t\,.$$
\end{lemma}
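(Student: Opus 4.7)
The plan is to decompose $\tau(t)$ via excursions of the walk from $\rho$. Let $H_1,H_2,\ldots$ be the successive (i.i.d.\ $\mathrm{Exp}(1)$) holding times of the walk at $\rho$, and let $E_1,E_2,\ldots$ be the lengths of the corresponding excursions away from $\rho$. Because $d_\rho = 2$, the local time at $\rho$ grows at rate $1/2$ while the walk sits at $\rho$, so $L^\rho$ first reaches $t$ during the $N$-th stay at $\rho$, where $N := \min\{k : \tfrac12\sum_{i=1}^k H_i \ge t\}$. Tracing the timeline gives the clean identity
\[
\tau(t) = \sum_{i=1}^{N-1}(H_i + E_i) + \bigl(2t - \textstyle\sum_{i=1}^{N-1} H_i\bigr) = 2t + \sum_{i=1}^{N-1} E_i.
\]
Since $\{\sum_{i=1}^k H_i\}_k$ are the arrival times of a rate-$1$ Poisson process, $N - 1 \sim \mathrm{Poisson}(2t)$; and by construction $N$ is independent of the i.i.d.\ excursion sequence $(E_i)_{i\ge 1}$ (the two use disjoint pieces of the walk's randomness).

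The variance identity for a Poisson-indexed sum of i.i.d.\ random variables,
\[
\var\bigl(\textstyle\sum_{i=1}^M X_i\bigr) = \E[M]\var(X_1) + \var(M)(\E X_1)^2 = \lambda\,\E[X_1^2], \quad M\sim\mathrm{Poisson}(\lambda)\text{ indep.\ of } (X_i),
\]
applied with $M = N-1$ and $X_i = E_i$, then gives $\var(\tau(t)) = 2t\cdot \E[E_1^2]$, so it remains to show $\E[E_1^2] = O(2^{2n})$.

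To do so, note that by the strong Markov property $E_1$ has the same law as the hitting time $T_\rho$ started from a child $c$ of $\rho$. Writing $\tau_k := \E_v T_\rho$ for $v$ at depth $k$, the recursion $\tau_k = 1 + \tfrac13\tau_{k-1} + \tfrac23\tau_{k+1}$ with boundary conditions $\tau_0 = 0$ and $\tau_n = 1+\tau_{n-1}$ solves to $\tau_k = 2^{n+2}(1-2^{-k}) - 3k$, so $M := \max_v \E_v T_\rho = O(2^n)$. Iterating Markov's inequality via the strong Markov property gives $\P_x(T_\rho > 2kM) \le 2^{-k}$ uniformly in $x\in V$, hence
\[
\E[E_1^2] = \E_c[T_\rho^2] = \int_0^\infty 2s\,\P_c(T_\rho > s)\,ds = O(M^2) = O(2^{2n}).
\]
Combining the two displays yields $\var(\tau(t)) = O(2^{2n}\,t)$, as claimed. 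The only (minor) obstacle is to set up the excursion decomposition cleanly and verify the Poisson/independence structure; the remaining hitting-time moment bound is a routine calculation.
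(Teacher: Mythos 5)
Your proof is correct, but it takes a genuinely different route from the paper's. You decompose $\tau(t)$ as the time spent at the root plus a Poisson number of i.i.d.\ excursion lengths, obtaining the exact identity $\var(\tau(t)) = 2t\,\E[E_1^2]$ (with $N-1\sim\mathrm{Poisson}(2t)$ independent of the excursions), and then bound $\E[E_1^2]=O(2^{2n})$ via the maximal expected hitting time of the root together with the standard geometric-iteration tail bound; this, like the paper's normalization $\E L^u_{\tau(t)}=t$, implicitly uses the constant-speed convention (rate-one holding times at every vertex), and your decomposition reproduces the paper's identity $\E\tau(t)=2t(1+\E E_1)=2|E|\,t$, which is a good consistency check on the setup. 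The paper instead writes $\tau(t)=\sum_{v\in V} d_v L^v_{\tau(t)}$ and bounds pairwise covariances: conditioning on the local time at $w=u\wedge v\in V_k$, using conditional independence of the two branches below $w$ and the $\mathrm{PoiGamma}(t/k,k)$ law, it gets $\Cov(L^u_{\tau(t)},L^v_{\tau(t)})\le 16tk$ and then sums over pairs. Your approach buys an exact variance formula and portability: it needs no tree-specific conditional independence and no PoiGamma computation, only a second-moment bound on the return time, so it transfers to general graphs with $2^{2n}$ replaced by the square of the maximal expected hitting time of the starting vertex. The paper's argument is shorter in context because it reuses machinery (the PoiGamma description of local times along the tree) already in place, and it also exhibits how the covariance decays with the depth of the common ancestor, which is in the same spirit as the correlation estimates used elsewhere in the proof.
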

\begin{proof}
Note that $\tau(t) = \sum_{v\in V} d_v L^v_{\tau(t)}$. Consider $u,
v\in V$ and write $w = u \wedge v$. Assume that $w\in V_k$. Then,
$$\E(L^v_{\tau(t)} \cdot L^u_{\tau(t)}) = \E(\E(L^v_{\tau(t)} \cdot L^u_{\tau(t)}) \mid L^w_{\tau(t)}) = \E((L^w_{\tau(t)})^2) = t^2 + \var(L^w_{\tau(t)}) \leq t^2 + 16tk\,,$$
where the last transition follows from the fact that
$L^w_{\tau(t)}\sim \mathrm{PoiGamma}(t/k, k)$ and a simple
application of the total variance formula $\var X = \E (\var (X \mid
Y)) + \var(\E(X \mid Y))$. We then have $\mathrm{Cov}(L^v_{\tau(t)},
L^u_{\tau(t)}) \leq 16 tk$. Therefore,
$$\var(\tau(t)) = \sum_{u, v\in V} d_v d_u \mathrm{Cov}(L^v_{\tau(t)}, L^u_{\tau(t)}) \leq  \sum_{k=1}^n 2^k 2^{2(n-k)} 3^2 \cdot 16 tk = O(1) \cdot 2^{2n} t \,,$$
where we used the fact that $d_v \leq 3$.
\end{proof}

Now it is obvious that Theorems~\ref{thm-upper-bound},
\ref{thm-lower-bound} imply Theorem~\ref{thm-cover-inverse}.
Together with Lemma~\ref{lem-concentration-inverse-time}, we
complete the proof of Theorem~\ref{thm-cover-time}, by noting that
$\E(\tau(t)) = t \cdot 2|E| = (2^{n+2}-4) t$.

\section*{Acknowledgement}

We thank Amir Dembo and Yuval Peres for helpful discussions.

\small

\bibliography{covertime}
\bibliographystyle{abbrv}

\end{document}